 \newtheorem{thm}{Theorem}[section]
 \newtheorem{cor}[thm]{Corollary}
 \newtheorem{prop}[thm]{Proposition}
 \theoremstyle{definition}
 \newtheorem{defn}[thm]{Definition}
 \theoremstyle{remark}
 \newtheorem{rem}[thm]{Remark}
 \newtheorem{ex}[thm]{Example}
 \numberwithin{equation}{section}
\begin{document}

%
%
%
%
%
%
%
%
%

\title[ pseudo B-Weyl operators and generalized Drazin invertibility]
{On pseudo B-Weyl operators and generalized Drazin invertibility for operator matrices
}

\author[H. Zariouh]{H. Zariouh}

\address{%
Centre r\'egional des m\'etiers
de l'\'education et de la formation,\\
 B.P 458, Oujda, Morocco et\\
 Equipe de la Th\'eorie des Op\'erateurs\\Universit\'e Mohammed
 I,
 Facult\'e des Sciences\\
 D\'ept. de Math\'ematiques et Informatique,  Morocco.}

\email{h.zariouh@yahoo.fr}

\author{H. Zguitti}
\address{Department of Mathematics and Computer Science\\
Multidisciplinary Faculty of Nador\\
Mohammed First University, PO Box 300\\
 Selouane 62702 Nador, Morocco.}
\email{h.zguitti@ump.ma}
\subjclass{Primary 47A53, 47A10, 47A11.}
\keywords{Generalized Drazin spectrum, pseudo B-Weyl operator, single-valued extension
property, operator matrices}


\begin{abstract}
We introduce a new class which generalizes the class of B-Weyl operators. We say
 that $T\in L(X)$ is pseudo B-Weyl if $T=T_1\oplus T_2$ where $T_1$ is a Weyl
 operator and $T_2$ is a quasi-nilpotent operator. We show that the corresponding
pseudo B-Weyl spectrum $\sigma_{pBW}(T)$ satisfies the equality
  $\sigma_{pBW}(T)\cup[{\mathcal S}(T)\cap{\mathcal
S}(T^*)]=\sigma_{gD}(T);$ where $\sigma_{gD}(T)$ is the generalized
Drazin spectrum of $T\in L(X)$ and ${\mathcal S}(T)$  (resp.,
${\mathcal S}(T^*)$) is the set where $T$ (resp., $T^*$) fails to
have SVEP. We also investigate the generalized Drazin invertibility
of upper triangular operator matrices by giving sufficient
 conditions which assure that the generalized Drazin spectrum or the pseudo
 B-Weyl spectrum of an upper triangular operator matrices is the union of its diagonal entries spectra.
\end{abstract}

\maketitle

\section{Introduction and Preliminaries}

Let $X$ and $Y$ be Banach
spaces and let $L(X,Y)$ denote
 the algebra of all bounded linear operators from $X$ to $Y.$ We shall write
 $L(X)$ for the algebra $L(X,X).$ For $T\in L(X),$ by  $T^*,$ ${\mathcal N}(T),$ ${\mathcal R}(T),$ $\sigma(T),$
 $\sigma_l(T),$ $\sigma_r(T),$ $\sigma_p(T),$ $\sigma_{ap}(T)$ and $\sigma_s(T),$ we denote
 respectively, the adjoint of $T,$
  the null space, the
 range, the spectrum of $T,$  the left spectrum of $T$, the right spectrum of $T,$ the point spectrum of $T,$
 the approximate point spectrum of $T$ and the surjective spectrum of
 $T.$

 \smallskip
 A bounded linear
 operator $T\in L(X)$ is said to have the {\it single-valued
 extension property} (SVEP for short) at $\lambda\in\mathbb{C}$ if
  for every open neighborhood $U_\lambda$ of $\lambda,$ the constant function $f\equiv 0$ is the only
 analytic solution of the equation
 $(T-\mu I)f(\mu)=0\quad\forall\mu\in U_\lambda.$ We denote by   ${\mathcal S}(T)$
  the open set of $\lambda\in\mathbb{C}$ where $T$ fails to have  SVEP at $\lambda$,   and we say that  $T$ has  SVEP
  if
$ {\mathcal S}(T)=\emptyset.$ It is easy to see that ${\mathcal S}(T)\subset\sigma_p(T)$ (See \cite{LN} for more details about
this spectral property). According to \cite[Lemma3]{LV} we have
$$\label{surj}
\sigma(T)={\mathcal S}(T)\cup\sigma_s(T)$$ and in particular $\sigma_s(T)$ contains the topological boundary of ${\mathcal S}(T)$. Moreover, it is obvious
 that $T$ has  SVEP at every point $\lambda\in
\mbox{iso}\sigma(T).$ Henceforth, the symbol $\mbox{iso}\Lambda$ means  isolated points of a
given subset $\Lambda$ of $\mathbb{C}$ and $\mbox{acc}\Lambda$ denotes the set
of all points of accumulation of $\Lambda.$

\smallskip
$T\in L(X)$ is called an {\it upper semi-Fredholm} (resp., {\it
lower semi-Fredholm}) if ${\mathcal R}(T)$ is closed and
$n(T):=\dim{\mathcal N}(T)<+\infty$ (resp.,
$d(T):=\mbox{codim}{\mathcal R}(T)<+\infty$). If $T$ is either upper
or lower semi-Fredholm then $T$ is called a {\it semi-Fredholm}
operator. The {\it index} of a semi-Fredholm operator $T$ is defined
by $\mbox{ind}(T)=n(T)-d(T).$ $T\in L(X)$ is called a {\it Fredholm}
operator if both $n(T)$ and $d(T)$ are finite, and is called a {\it
Weyl} operator if it is a Fredholm of index zero. The {\it essential
spectrum} of $T$ is defined by $\sigma_e(T)=\{ \lambda \in
\mathbb{C}: T-\lambda I \mbox{ is not a Fredholm operator}\},$ and
the {\it Weyl spectrum} of $T$ is defined by  $\sigma_{W}(T)=\{
\lambda \in \mathbb{C}: T-\lambda I \mbox{ is not a Weyl
operator}\}.$ Let ${\mathcal F}(X)$ denote the ideal of finite rank
operators in $L(X).$ Then it is well known that
$$\sigma_{W}(T)=\displaystyle{\bigcap_{F\in {\mathcal
F}(X)}}\sigma(T+F).$$

Recall that an operator $T\in L(X)$ is said to be {\it semi-regular}, if ${\mathcal R}(T)$ is closed and
${\mathcal N}(T^n)\subseteq {\mathcal R}(T),$ for all $n\in {\mathbb {N}},$ see for example \cite{MM}. In addition, it was proved in \cite{K} that  given a semi-Fredholm operator $T\in L(X),$ there exist two closed $T$-invariant  subspaces $X_1,$ $X_2$ such that $X=X_1\oplus X_2,$ $T|X_{1}$  is  nilpotent and  $T|X{_2}$ is  semi-regular. This decomposition
is known as the {\it Kato decomposition}, and the operators satisfying these conditions, which were
characterized in \cite{lab}, are said to be the {\it quasi-Fredholm operators}.

\smallskip
Berkani gave a generalization of Fredholm operators as follows: for each nonnegative integer
 $n$ define $T_{[n]}$ to be the
restriction of $T$ to $\mathcal{R}(T^n)$ viewed as a map from $\mathcal{R}(T^n)$ into
$\mathcal{R}(T^n)$ (in particular $T_{[0]}=T$). If for some $n$, $\mathcal{R}(T^n)$ is
closed and $T_{[n]}$ is a Fredholm operator then $T$ is called a
{\it B-Fredholm} operator. $T$ is said to be a {\it B-Weyl} operator
if $T_{[n]}$ is a Fredholm operator of index zero (see
\cite{Berk2002}). The {\it B-Weyl spectrum} $\sigma_{BW}(T)$ of $T$
is defined by
$$\sigma_{BW}(T)=\{\lambda\in\mathbb{C} : T-\lambda I\mbox{ is not a
B-Weyl operator}\}.$$
On the other hand, and according to \cite[Proposition 2.6]{BE1}, a B-Fredholm operator is quasi-Fredholm; what is more, according to \cite[Theorem 2.7]{BE1}, if $T\in L(X)$ is B-Fredholm, then there exist
two closed $T$-invariant  subspaces $X_1,$ $X_2$ such that $X=X_1\oplus X_2,$ $T|X_{1}$  is  Fredholm  and  $T|X{_2}$ is  nilpotent (see also \cite[Theorem 7]{muller}).

\smallskip
An operator $T\in L(X)$ is said to be a {\it Drazin invertible} if
there exists a positive integer $k$ and an operator $S\in L(X)$ such
that $$ST=TS, \quad T^{k+1}S=T^k\mbox{ and } S^2T=S.$$
It is well known that $T$ is Drazin
invertible  if and only if $T=U\oplus V;$
where $U$ is an invertible operator and $V$  is a nilpotent one (see
\cite[Corollary 2.2]{Lay}).  The {\it Drazin spectrum} of $T\in
L(X)$ is defined by
$$\sigma_D(T)=\{\lambda\in\mathbb{C} : T-\lambda I\mbox{ is not
Drazin invertible}\}.$$

In \cite{Berk2002} it is shown that
$$\sigma_{BW}(T)=\displaystyle{\bigcap_{F\in
{\mathcal F}(X)}}\sigma_{D}(T+F).$$ From \cite[Lemma 4.1]{Berk2002},
$T$ is a B-Weyl operator if and only if $T=F\oplus N$, where $F$ is
a Weyl operator and $N$ is a nilpotent operator. Hence
$\sigma_{BW}(T)\subset\sigma_D(T)$. The defect set
$\sigma_D(T)\setminus\sigma_{BW}(T)$ has been  characterized in
\cite{AZ2011, zguitti2010} as follows:
\begin{equation}\label{DBWS}\sigma_{BW}(T)\cup[{\mathcal S}(T)\cap {\mathcal S}(T^*)]=\sigma_D(T).
\end{equation}

Quasi-Fredholm operators were generalized to pseudo Fredholm operators. In fact, $T\in L(X)$ is said to be a {\it pseudo Fredholm} operator if there exist two closed $T$-invariant  subspaces $X_1,$ $X_2$ such that $X=X_1\oplus X_2,$ $T|X_{1}$  is  quasi-nilpotent and  $T|X{_2}$ is  semi-regular. This decomposition is called the {\it generalized Kato decomposition}, see \cite{mbekhta1, mbekhta2}.

\smallskip
 Following Koliha
\cite{Kol1}, an operator $T\in L(X)$ is {\it generalized Drazin
invertible} if and only if $0\not\in \mbox{acc} \sigma(T),$ which is
also equivalent to the fact that $T=T_1\oplus T_2;$ where $T_1$ is
invertible and $T_2$ is quasi-nilpotent. The {\it generalized Drazin spectrum} of $T\in L(X)$  is defined by
$$\sigma_{gD}(T)=\{\lambda\in\mathbb{C} : T-\lambda I\mbox{ is not
generalized Drazin invertible}\}.$$
 For more details about
generalized Drazin invertibility, we refer the reader to
\cite{DjCzech, Kol1, Kol2}. It is not difficult to see that
$\sigma_{D}(T)=\sigma_{gD}(T)\cup\mbox{iso}\sigma_{D}(T).$ The
inclusion $\sigma_{gD}(T)\subset\sigma_{D}(T)$  may be strict.
Indeed, let $T$  defined on $ l^2(\mathbb{N})$  by $$T(x_1, x_2,
x_3, \ldots)=({1\over 2}x_2, {1\over 3}x_3, {1\over 4}x_4,
\ldots),$$ then it is clear that $T$ is quasi-nilpotent with
infinite ascent. Hence $\sigma_{gD}(T)=\emptyset$ and
$\sigma_{D}(T)=\{0\}.$

\smallskip
More recently, B-Fredholm operators were
generalized to pseudo B-Fredholm operators. Precisely,  $T\in L(X)$ is said to be a {\it pseudo B-Fredholm} operator if there exist two closed $T$-invariant  subspaces $X_1,$ $X_2$ such that $X=X_1\oplus X_2,$ $T|X_{1}$  is  quasi-nilpotent and  $T|X{_2}$ is  Fredholm, see \cite{boasso}.

\smallskip
 As a continuation in this direction, in the second section of the present work,  we  generalize  the B-Weyl operators and then the Weyl operators
to pseudo B-Weyl operator.  $T\in L(X)$ will be said to be
 pseudo B-Weyl operator if $T$ can be written as $T=T_1\oplus T_2;$ where $T_1$ is
  Weyl operator and $T_2$ is quasi-nilpotent. The corresponding spectrum will
   be denoted by $\sigma_{pBW}(T).$ Among other things, we prove that
    $$\sigma_{pBW}(T)\cup[{\mathcal S}(T)\cap{\mathcal
S}(T^*)]=\sigma_{gD}(T).$$  We prove also that
$$\displaystyle{\bigcap_{F\in{\mathcal F}(X)}}\sigma_{gD}(T+F)\subset\sigma_{pBW}(T).$$
  In the third section,
  we investigate the generalized Drazin spectrum of upper triangular operator matrices  $M_C=(\begin{smallmatrix}
   A & C \\
 0 & B
 \end{smallmatrix})$, where $A\in L(X)$, $B\in L(Y)$ and $C\in L(Y,X)$. After remarking that the inclusion
$\sigma_{gD}(M_C)\subset \sigma_{gD}(A)\cup\sigma_{gD}(B)$  is
proper, we investigate the defect set
$[\sigma_{gD}(A)\cup\sigma_{gD}(B)]\setminus\sigma_{gD}(M_C)$ in
connection with local spectral theory. Precisely,
we prove that $\sigma_{gD}(M_C)\cup[{\mathcal S}(A^*)\cap
 {\mathcal S}(B)]=\sigma_{gD}(A)\cup\sigma_{gD}(B)$ for all $C\in L(Y,
 X),$ and  we give   sufficient conditions on
$A$ and $B$ which ensure the equality
$\sigma_{gD}(M_C)=\sigma_{gD}(A)\cup\sigma_{gD}(B).$ We also
investigate the largest set of operators $C\in L(Y,X)$ for which the
last equality holds for all $A\in L(X)$ and $B \in L(Y).$

\section{On pseudo B-Weyl operators}

\begin{defn}\label{dfn1}\rm Let $T\in L(X).$ We say that $T$ is {\it pseudo B-Weyl }
if there exist two closed T-invariant subspaces $X_1,$ $X_2$ such
that $X=X_1\oplus X_2$,
 $T|X_{1}$  is  a Weyl operator  and  $T|X{_2}$
 is  a quasi-nilpotent operator. The pseudo B-Weyl spectrum  $\sigma_{pBW}(T)$ of $T$ is defined by
  $\sigma_{pBW}(T)=\{\lambda\in\mathbb{C}\,:\,T-\lambda I\mbox{ is not
  pseudo B-Weyl}\}.$\end{defn}

It is easy to see that $T$ is pseudo B-Weyl
if and only if $T^*$ is pseudo B-Weyl. Then
$\sigma_{pBW}(T)=\sigma_{pBW}(T^*).$ Let $pBW(X)$ denote the class
of all pseudo B-Weyl operators. From the definition of pseudo
B-Weyl operators, it is easily seen that all B-Weyl operators, all
 quasi-nilpotent operators and all generalized Drazin operators are
 pseudo B-Weyl operators. So the class $pBW(X)$  contains $BW(X)$ the class of B-Weyl operators as a proper subclass.

\begin{thm} Assume that $\mathcal{H}$ is a separable, infinite dimensional, complex Hilbert space. Then for every $T\in L(\mathcal{H})$ the following assertions are equivalent:\\
i) $T$ is in the norm closure of $pBW(\mathcal{H});$\\
ii) $T$ is in the norm closure of $BW(\mathcal{H}).$
\end{thm}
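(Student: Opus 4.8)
The plan is to prove the sharper statement that the two norm closures actually coincide, $\overline{pBW(\mathcal{H})}=\overline{BW(\mathcal{H})}$; the equivalence of (i) and (ii) is then immediate. Since every B-Weyl operator is pseudo B-Weyl, we have $BW(\mathcal{H})\subset pBW(\mathcal{H})$ and hence $\overline{BW(\mathcal{H})}\subset\overline{pBW(\mathcal{H})}$, which already yields (ii)$\Rightarrow$(i). For the reverse inclusion it suffices to establish $pBW(\mathcal{H})\subset\overline{BW(\mathcal{H})}$, because passing to closures then gives $\overline{pBW(\mathcal{H})}\subset\overline{BW(\mathcal{H})}$, and therefore equality.

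So I would fix $T\in pBW(\mathcal{H})$ and write $T=T_1\oplus T_2$ with respect to a topological decomposition $\mathcal{H}=X_1\oplus X_2$, where $T_1=T|X_1$ is Weyl and $T_2=T|X_2$ is quasi-nilpotent. If $\dim X_2<\infty$, then $T_2$ is nilpotent and $T=T_1\oplus T_2$ is already B-Weyl by \cite[Lemma 4.1]{Berk2002}, so nothing is to be done. Assume then $\dim X_2=\infty$; since $\mathcal{H}$ is separable, $X_2$ is a separable infinite-dimensional Hilbert space carrying the quasi-nilpotent operator $T_2\in L(X_2)$.

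The crux is to approximate $T_2$ in norm by nilpotent operators on $X_2$. Here I would invoke the Apostol--Foia\c s--Voiculescu description of the norm closure of the nilpotents: $Q\in L(X_2)$ lies in that closure once $Q$ is biquasitriangular, $\sigma(Q)$ is connected and contains $0$, and $\sigma_e(Q)$ is connected and contains $0$. For a quasi-nilpotent $Q$ these hypotheses are verified directly. Indeed $\sigma(Q)=\{0\}$, and $\sigma_e(Q)\subset\sigma(Q)$ is non-empty (the Calkin algebra being unital and non-trivial), so $\sigma_e(Q)=\{0\}$; both sets are connected and contain $0$. Moreover $Q-\lambda$ is invertible for every $\lambda\neq 0$, and $Q$ cannot be semi-Fredholm at $0$: otherwise its semi-Fredholm domain would be all of $\mathbb{C}$, on which the (locally constant) index would take the value $0$ attained at the invertible points, forcing $Q$ to be Weyl and hence $0\notin\sigma_e(Q)$, a contradiction. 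Thus the semi-Fredholm domain of $Q$ is exactly $\mathbb{C}\setminus\{0\}$, where the index vanishes, and the same holds for $Q^*$, so $Q$ is biquasitriangular. Consequently there are nilpotent operators $N_k\in L(X_2)$ with $\|N_k-T_2\|\to 0$.

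Finally I would reassemble. For each $k$ the operator $T_1\oplus N_k$ is the direct sum of a Weyl operator and a nilpotent operator, hence B-Weyl by \cite[Lemma 4.1]{Berk2002}, so $T_1\oplus N_k\in BW(\mathcal{H})$. Writing $P_2$ for the bounded projection of $\mathcal{H}$ onto $X_2$ along $X_1$, one has $T_1\oplus N_k-T=(N_k-T_2)P_2$, whence $\|T_1\oplus N_k-T\|\le\|P_2\|\,\|N_k-T_2\|\to 0$, so $T\in\overline{BW(\mathcal{H})}$. The main obstacle is exactly the middle step: one must secure that the quasi-nilpotent summand is approximable by nilpotents, which rests on the Apostol--Foia\c s--Voiculescu machinery together with the verification of biquasitriangularity and the spectral conditions; the reassembly is then routine, the only care needed being the non-orthogonality of the decomposition, handled through the boundedness of $P_2$.
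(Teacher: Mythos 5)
Your argument is essentially the paper's own proof: both decompose $T=T_1\oplus T_2$, approximate the quasi-nilpotent summand $T_2$ in norm by nilpotents via the Apostol--Voiculescu theorem, and reassemble $T_1\oplus N_k$ as B-Weyl approximants of $T$. You supply some extra (correct and welcome) detail the paper omits --- the verification of the hypotheses for approximating a quasi-nilpotent by nilpotents, the separate finite-dimensional case, and the $\|P_2\|$ estimate handling the non-orthogonal decomposition --- but the route is the same.
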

\begin{proof} (i) $\Longrightarrow$ (ii) because $BW(\mathcal{H})\subset pBW(\mathcal{H}).$\\
 (ii) $\Longrightarrow$ (i) Let $T\in pBW(\mathcal{H})$. Then $T=T_1\oplus T_2$ where $T_1$ is Weyl
  operator and $T_2$ is quasi-nilpotent.
 Then it follows from \cite{Apostol} that there exists a sequence of nilpotent operators $T_{2,n}$ which converges in
  norm to $T_2$. Hence $T_1\oplus T_{2,n}$ is a sequence of B-Weyl operators which converges in norm to $T.$
  Thus $T$ is in the norm closure of $BW(\mathcal{H}).$
\end{proof}

\begin{cor} Assume that $\mathcal{H}$ is a separable, infinite dimensional, complex Hilbert space. Then
 $$\overline{gBW(\mathcal{H})}^{\|\,\|}=\overline{BW(\mathcal{H})}^{\|\,\|}.$$
\end{cor}

\smallskip
Recall that $T\in L(X)$ is of {\it finite descent} if there exists a
 nonnegative integer $p$ such that  $\mathcal{R}(T^p)=\mathcal{R}(T^{p+1})$.

\begin{prop} Let $T\in L(X)$ with finite descent.
Then $T$ is pseudo B-Weyl if and only if $T$ is
B-Weyl.\end{prop}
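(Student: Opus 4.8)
The plan is to establish the nontrivial implication; the reverse one is immediate, since every B-Weyl operator is pseudo B-Weyl (as noted right after Definition~\ref{dfn1}), a nilpotent operator being in particular quasi-nilpotent. So assume $T$ is pseudo B-Weyl and has finite descent, and write $X=X_1\oplus X_2$ with $X_1,X_2$ closed and $T$-invariant, $T_1:=T|X_1$ a Weyl operator and $T_2:=T|X_2$ quasi-nilpotent. The first step is to transfer the finite descent hypothesis from $T$ to the quasi-nilpotent summand $T_2$. Since $T^n=T_1^n\oplus T_2^n$, we have $\mathcal{R}(T^n)=\mathcal{R}(T_1^n)\oplus\mathcal{R}(T_2^n)$ for every $n$; applying the bounded projection of $X$ onto $X_2$ along $X_1$ to the equality $\mathcal{R}(T^p)=\mathcal{R}(T^{p+1})$ provided by the finite descent of $T$ yields at once $\mathcal{R}(T_2^p)=\mathcal{R}(T_2^{p+1})$. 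Thus $T_2$ has finite descent.

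The heart of the matter is then the claim that a quasi-nilpotent operator with finite descent is nilpotent; granting this, $T=T_1\oplus T_2$ exhibits $T$ as the direct sum of a Weyl operator and a nilpotent one, hence $T$ is B-Weyl, which is what we want. To prove the claim I would argue as follows. Being quasi-nilpotent, $T_2$ satisfies $\sigma(T_2)=\{0\}$, so $0$ is an isolated point of $\sigma(T_2)$ and $T_2$ has SVEP at $0$. Combining SVEP at $0$ with the finiteness of the descent $q(T_2)$, the standard interplay between SVEP and the finiteness of ascent and descent forces the ascent $p(T_2)$ to be finite as well, with $p(T_2)=q(T_2)=:k$. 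Consequently $0$ is a pole of the resolvent of $T_2$, and one has the classical splitting $X_2=\mathcal{N}(T_2^k)\oplus\mathcal{R}(T_2^k)$ with $\mathcal{R}(T_2^k)$ closed and $T_2|\mathcal{R}(T_2^k)$ invertible. Since that restriction is at the same time quasi-nilpotent, its spectrum would be simultaneously equal to $\{0\}$ and contained in $\mathbb{C}\setminus\{0\}$, which is impossible unless $\mathcal{R}(T_2^k)=\{0\}$. Hence $T_2^k=0$, i.e.\ $T_2$ is nilpotent, and the claim follows.

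The step I expect to require the most care is precisely the passage from \emph{finite descent plus SVEP at $0$} to \emph{finite ascent} (equivalently, to $0$ being a pole of the resolvent). This is where quasi-nilpotency is used in an essential way, and in fact twice: once to secure SVEP at $0$, and once to annihilate the invertible part of the pole decomposition. One must keep in mind that finite descent alone does not suffice for this conclusion — the unweighted backward shift has descent $0$ but infinite ascent, and indeed fails to have SVEP at $0$ — so the quasi-nilpotency hypothesis (through SVEP) cannot be dispensed with. Once the nilpotency of $T_2$ is in hand, the remainder of the argument is purely formal, as indicated above.
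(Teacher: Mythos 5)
Your proof is correct and follows the same skeleton as the paper's: decompose $T=T_1\oplus T_2$, transfer the finite descent to the quasi-nilpotent summand $T_2$, show $T_2$ is nilpotent, and conclude via Berkani's characterization of B-Weyl operators as Weyl $\oplus$ nilpotent. The only real divergence is in how the central fact --- \emph{a quasi-nilpotent operator with finite descent is nilpotent} --- is established: the paper simply cites \cite[Corollary 10.6]{Tay}, whereas you rederive it through local spectral theory (SVEP at $0$ from the isolatedness of $0$ in $\sigma(T_2)$, then finite ascent, then the pole decomposition whose invertible part must vanish). Your route is sound, and your closing remark correctly identifies the delicate step; but be aware that the implication ``SVEP at $0$ plus finite descent $q$ implies finite ascent'' is not quite as off-the-shelf as the phrase ``standard interplay'' suggests. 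The genuinely standard implications run the other way ($p<\infty$ gives SVEP for $T$, $q<\infty$ gives SVEP for $T^*$), and the converse you need fails without extra hypotheses; here the proof goes through because $T_2$ maps the operator range $\mathcal{R}(T_2^{q})$ onto itself, and one applies the open mapping theorem to $\mathcal{R}(T_2^{q})$ equipped with its canonical complete (quotient) norm --- this subspace need not be closed in $X_2$ --- to build, from any nonzero $x_0\in\mathcal{N}(T_2)\cap\mathcal{R}(T_2^{q})$, a norm-controlled chain of preimages and hence a nonzero analytic solution of $(T_2-\mu)f(\mu)=0$, contradicting SVEP at $0$. Either supply that argument (or a precise reference for it) or, as the paper does, cite Taylor--Lay directly; with that point pinned down your proof is complete.
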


\begin{proof} If  $T$ is pseudo B-Weyl , then $X=X_1\oplus X_2,$  where
$X_1,$ $X_2$ are closed subspaces of $X,$
 $T|X_1$ is Weyl operator and $T|X_2$ is quasi-nilpotent operator.
 Since $T$ is of finite descent, then $T|X_1$ and $T|X_2$ both are of finite descent.
Now $T|X_2$ is quasi-nilpotent with finite descent, then it follows from \cite[Corollary
10.6]{Tay} that $T|X_2$ is nilpotent operator. Thus
$T$ is B-Weyl operator by \cite[Lemma 4.1]{Berk2002}. The
opposite sense  is always true.\end{proof}

\begin{rem}\rm Let $T$ be a bilateral shift on $l^2(\mathbb{Z})$.
Then $T$ is pseudo B-Weyl if and only if $T$ is Weyl operator or $T$
 is quasi-nilpotent operator. Indeed, if $T$ is pseudo B-Weyl , then there exist two closed  T-invariant
 subspaces $X_1$ and $X_2$ such that $l^2(\mathbb{Z})=X_1\oplus X_2,$
 $T|X_1$ is Weyl operator and $T|X_2$ is quasi-nilpotent operator.
 Let $P$ be the projection on $X_1$ with ${\mathcal R}(P)=X_1$ and ${\mathcal N}(P)=X_2$.
 Since $P$ commutes with $T$ then by \cite[Theorem 3]{Shields} there
 exists some $\phi\in L^\infty(\beta)$ such that $M_\phi$ is similar to $P$.
 Since $P^2=P$ then $\phi^2=\phi$. Hence $\phi=1$ or $\phi=0$.
 Thus $P=I$ or $P=0$. Then $X_1=l^2(\mathbb{Z})$ or $X_1=\{0\}$.
 It follows that $T$ is Weyl or quasi-nilpotent. The converse is trivial.\end{rem}

It is easily seen that
$\sigma_{pBW}(T)\subset\sigma_{gD}(T).$ But,
in general,  this inclusion is proper as we can see in the
following example.

\begin{ex}\label{ex1}\rm
Here and elsewhere $S$ denotes the unilateral unweighted shift
operator on $l^2(\mathbb{N})$ defined by $$S(x_1, x_2, x_3,
\ldots)=(0, x_1, x_2, x_3, \ldots).$$ Let $T=S\oplus S^*.$ Then
$\sigma_{gD}(T)=\{\lambda\in\mathbb{C}\,:\,|\lambda|\leq 1\}$. As
$n(T)=d(T)=1$ then $T$ is pseudo B-Weyl. So
$0\not\in\sigma_{pBW}(T).$ This shows that the inclusion
$\sigma_{pBW}(T)\subset\sigma_{gD}(T)$ is proper. \end{ex}

 Then it
is naturel to ask about the   defect set
$\sigma_{gD}(T)\setminus\sigma_{pBW}(T)$. Thanks to the SVEP we give
 a characterization of  this  defect set.

\begin{thm}\label{thm1} Let $T\in L(X).$ Then
$$\sigma_{pBW}(T)\cup[{\mathcal S}(T)\cap{\mathcal
S}(T^*)]=\sigma_{gD}(T).$$\end{thm}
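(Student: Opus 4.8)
The plan is to prove the two inclusions separately, exploiting the analogy with the already-established identity \eqref{DBWS} for the B-Weyl / Drazin pair. First I would establish the easy inclusion $\sigma_{pBW}(T)\cup[{\mathcal S}(T)\cap{\mathcal S}(T^*)]\subseteq\sigma_{gD}(T)$. We already know $\sigma_{pBW}(T)\subseteq\sigma_{gD}(T)$, so it remains to show ${\mathcal S}(T)\cap{\mathcal S}(T^*)\subseteq\sigma_{gD}(T)$. For this I would argue by contraposition: if $\lambda\notin\sigma_{gD}(T)$, then $T-\lambda I$ is generalized Drazin invertible, so $T-\lambda I=T_1\oplus T_2$ with $T_1$ invertible and $T_2$ quasi-nilpotent. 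An invertible operator has SVEP (trivially, since its point spectrum is empty on the relevant neighborhood), and a quasi-nilpotent operator also has SVEP at $0$; hence $T-\lambda I$ has SVEP at $0$, i.e. $\lambda\notin{\mathcal S}(T)$. This alone shows $\lambda$ is not in the left-hand intersection, which is even stronger than needed.

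Next comes the substantive inclusion $\sigma_{gD}(T)\subseteq\sigma_{pBW}(T)\cup[{\mathcal S}(T)\cap{\mathcal S}(T^*)]$. I would again use contraposition: assume $\lambda\notin\sigma_{pBW}(T)$ and $\lambda\notin{\mathcal S}(T)\cap{\mathcal S}(T^*)$, and deduce $\lambda\notin\sigma_{gD}(T)$. Replacing $T$ by $T-\lambda I$, we may take $\lambda=0$. Since $T\in pBW(X)$, write $T=T_1\oplus T_2$ on $X=X_1\oplus X_2$ with $T_1$ a Weyl operator on $X_1$ and $T_2$ quasi-nilpotent on $X_2$. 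Because the SVEP and the spectral assumptions are unaffected by the quasi-nilpotent summand $T_2$ (which contributes only $\{0\}$ to everything and has SVEP, as does its adjoint), the hypothesis $0\notin{\mathcal S}(T)\cap{\mathcal S}(T^*)$ transfers to the Weyl part: either $T_1$ has SVEP at $0$ or $T_1^*$ has SVEP at $0$. The goal is to upgrade the Weyl operator $T_1$ to an invertible one, which would give $T=T_1\oplus T_2$ as the sum of an invertible and a quasi-nilpotent operator, hence $0\notin\sigma_{gD}(T)$.

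The crux — and the step I expect to be the main obstacle — is this upgrade from Weyl to invertible using SVEP at a single point. The key fact I would invoke is the standard local-spectral-theory characterization: for a Fredholm operator $T_1$ of index zero, SVEP of $T_1$ at $0$ forces the ascent to be finite, while SVEP of $T_1^*$ at $0$ forces the descent to be finite; combined with index zero, either condition yields that $0$ is not an accumulation point of $\sigma(T_1)$ and in fact that $T_1$ is Drazin invertible, whence $T_1=U\oplus V$ with $U$ invertible and $V$ nilpotent. Folding the nilpotent $V$ into the quasi-nilpotent part $T_2$ then realizes $T$ as invertible-plus-quasi-nilpotent. The delicate point is verifying that SVEP at $0$ for $T$ (resp.\ $T^*$) is equivalent to SVEP at $0$ for $T_1$ (resp.\ $T_1^*$) under the direct-sum decomposition; this follows because SVEP localizes under direct sums and $T_2$, being quasi-nilpotent, automatically has SVEP everywhere — so ${\mathcal S}(T)={\mathcal S}(T_1)$ and ${\mathcal S}(T^*)={\mathcal S}(T_1^*)$ near $0$. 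Once this reduction is in place, the classical index-zero-plus-finite-ascent-or-descent argument closes the proof.
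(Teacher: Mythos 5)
Your proposal is correct and follows essentially the same route as the paper: both reduce to the Weyl summand $T_1$ of the pseudo B-Weyl decomposition (using that the quasi-nilpotent summand has SVEP and contributes nothing), then use SVEP of $T_1$ or $T_1^*$ at $0$ to upgrade the Weyl operator to a Drazin invertible one, and conclude that $0\notin\mbox{acc}\,\sigma(T)$. The only differences are presentational: you argue by contraposition and spell out the finite-ascent/finite-descent mechanism, where the paper argues by contradiction and cites the result that a B-Weyl operator with SVEP at $0$ is Drazin invertible.
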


\begin{proof} Since $\sigma_{pBW}(T)\cup[{\mathcal S}(T)\cap{\mathcal
S}(T^*)]\subset\sigma_{gD}(T)$ always holds, let
$\lambda\in\sigma_{gD}(T)\setminus\sigma_{pBW}(T).$ Without loss of
generality we can assume that $\lambda=0$.
 Then $T=T_1\oplus T_2$ on $X=X_1\oplus X_2$ such that $T_1$ is Weyl operator
  and $T_2$ is quasi-nilpotent operator. Assume that $0\not\in {\mathcal S}(T)\cap{\mathcal S}(T^*)$.

{\it Case 1.} $0\notin {\mathcal S}(T)$: since $T$ has  SVEP at $0,$
 then $T_1$ also has  SVEP at $0.$ As $T_1$ is Weyl operator and
 then is B-Weyl, it follows from \cite[Theorem 2.3]{AZ2011} that
$T_1$ is Drazin invertible. Moreover, $0 \in\sigma(T_1),$ because in
the otherwise,  $T_1$ will be invertible
 and therefore $T$ is
generalized Drazin invertible, a contradiction. Hence $0\not\in
\mbox{acc} \sigma(T_1).$ Since $T_2$ is quasi-nilpotent then
$0\not\in \mbox{acc} \sigma(T).$ Thus $T$ is generalized Drazin
  invertible. Which leads a contradiction.

{\it Case 2.} $0\notin {\mathcal S}(T^*)$: the proof follows similarly.
\end{proof}

From Theorem \ref{thm1},  in the   following corollary, we explore
sufficient conditions which ensures the equalities
$\sigma_{pBW}(T)=\sigma_{gD}(T)$. We point out that for the operator
$T$ defined in  Example \ref{ex1} we have ${\mathcal S}(T)={\mathcal
S}(T^*) ={\mathcal S}(S^*)=\{\lambda \in\mathbb{C}:
0\leq|\lambda|<1\}$ (see for instance \cite{HZ2, LN}). Hence
${\mathcal S}(T)\cap{\mathcal S}(T^*) =\{\lambda \in\mathbb{C}:
0\leq|\lambda|<1\}.$

\begin{cor}\label{cgBWgd} Let $T\in L(X).$ If ${\mathcal S}(T)\cap {\mathcal S}(T^*)=\emptyset,$ then
 $$\sigma_{pBW}(T)=\sigma_{gD}(T).$$ In particular, the equality holds if $T$ or $T^*$ has SVEP.
\end{cor}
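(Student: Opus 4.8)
The plan is to derive this corollary as an immediate consequence of Theorem \ref{thm1}, which has already established the general identity
$$\sigma_{pBW}(T)\cup[{\mathcal S}(T)\cap{\mathcal S}(T^*)]=\sigma_{gD}(T)$$
for every $T\in L(X)$. There is essentially no new argument required beyond feeding the hypothesis into this equality, so I expect no genuine obstacle here; the corollary is a specialization rather than a fresh result.

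The first step is to assume ${\mathcal S}(T)\cap{\mathcal S}(T^*)=\emptyset$ and substitute this into the identity of Theorem \ref{thm1}. The union on the left-hand side then collapses, since adjoining the empty set leaves $\sigma_{pBW}(T)$ unchanged, and we obtain $\sigma_{pBW}(T)=\sigma_{gD}(T)$ directly. This already proves the main assertion.

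For the \emph{in particular} clause, I would recall from the preliminaries that $T$ has SVEP precisely when ${\mathcal S}(T)=\emptyset$, and likewise $T^*$ has SVEP precisely when ${\mathcal S}(T^*)=\emptyset$. In either case one of the two sets in the intersection ${\mathcal S}(T)\cap{\mathcal S}(T^*)$ is empty, forcing the intersection itself to be empty. Thus the hypothesis of the first part is met, and the equality $\sigma_{pBW}(T)=\sigma_{gD}(T)$ follows from what has just been shown. The only point worth stating carefully is this translation between the SVEP assumption and the emptiness of the relevant spectral set, but this is immediate from the definition of ${\mathcal S}(\cdot)$ given earlier in the text.
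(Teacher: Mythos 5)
Your proposal is correct and matches the paper's intent exactly: the corollary is stated without a separate proof precisely because it follows by substituting ${\mathcal S}(T)\cap{\mathcal S}(T^*)=\emptyset$ into the identity of Theorem \ref{thm1}, and the \emph{in particular} clause is handled, as you note, by the definition of ${\mathcal S}(\cdot)$. No gaps.
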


In the next proposition, we show that  generalized Drazin spectrum
is stable under quasi-nilpotent and finite rank  commuting
perturbations.

\begin{prop} Let $T\in L(X).$ The the following statements hold.\\
i) If  $F\in {\mathcal F}(X)$ and commutes with $T,$ then
$\sigma_{gD}(T+F)=\sigma_{gD}(T).$\\
ii) If $Q\in L(X)$ is  a quasi-nilpotent and commutes  with $T,$
then $\sigma_{gD}(T+Q)=\sigma_{gD}(T).$\end{prop}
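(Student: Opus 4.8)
The plan rests on a single reformulation. By Koliha's characterization recalled above, $T-\lambda I$ is generalized Drazin invertible exactly when $\lambda\notin\mathrm{acc}\,\sigma(T)$, so that $\sigma_{gD}(S)=\mathrm{acc}\,\sigma(S)$ for every $S\in L(X)$. Thus both (i) and (ii) amount to proving that the commuting perturbation does not change the accumulation points of the spectrum; and since in each case the perturbation commutes with $T-\lambda I$ as well, I only need information about $\sigma(T+F)$ and $\sigma(T+Q)$ as sets.

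For (ii) I would first establish the stronger equality $\sigma(T+Q)=\sigma(T)$. Fix $\lambda\notin\sigma(T)$ and factor
$$T+Q-\lambda I=(T-\lambda I)\bigl(I+(T-\lambda I)^{-1}Q\bigr).$$
Since $Q$ commutes with $(T-\lambda I)^{-1}$, the product $(T-\lambda I)^{-1}Q$ is a product of two commuting operators one of which is quasi-nilpotent, hence is itself quasi-nilpotent (for commuting $a,b$ one has $r(ab)\le r(a)r(b)$, so here the spectral radius is $0$). Therefore $I+(T-\lambda I)^{-1}Q$ is invertible and $\lambda\notin\sigma(T+Q)$, giving $\sigma(T+Q)\subseteq\sigma(T)$; writing $T=(T+Q)+(-Q)$ with $-Q$ quasi-nilpotent and commuting with $T+Q$ yields the reverse inclusion. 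Consequently $\mathrm{acc}\,\sigma(T+Q)=\mathrm{acc}\,\sigma(T)$, and the reformulation closes the argument.

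For (i) the spectra themselves need not coincide, so I would work directly with accumulation points. The decisive observation, and the step where commutativity is really used, is that $W:={\mathcal R}(F)$ is finite dimensional and $T$-invariant: indeed $TW={\mathcal R}(TF)={\mathcal R}(FT)\subseteq{\mathcal R}(F)=W$. Fixing $\lambda\notin\sigma(T)$ and factoring $T+F-\lambda I=(T-\lambda I)\bigl(I+(T-\lambda I)^{-1}F\bigr)$ as before, the operator $T-\lambda I$ restricts to a bijection of the finite dimensional space $W$, so $(T-\lambda I)^{-1}$ also leaves $W$ invariant; hence $(T-\lambda I)^{-1}F$ has range in $W$ and the determinant $\det\bigl(I+(T-\lambda I)^{-1}F\bigr)$ reduces to a finite dimensional determinant on $W$:
$$g(\lambda):=\det\bigl(I+(T-\lambda I)^{-1}F\bigr)=\frac{\det\bigl((T+F)|_{W}-\lambda\bigr)}{\det\bigl(T|_{W}-\lambda\bigr)}.$$
Thus $g$ is a rational function of $\lambda$, and for $\lambda\notin\sigma(T)$ one has $\lambda\in\sigma(T+F)$ if and only if $g(\lambda)=0$.

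Finally I would take $\lambda_0\notin\mathrm{acc}\,\sigma(T)$ and choose a disc $D$ about $\lambda_0$ whose punctured version $D\setminus\{\lambda_0\}$ misses $\sigma(T)$. On $D\setminus\{\lambda_0\}$ the function $g$ is rational, hence has only finitely many zeros there, so $\sigma(T+F)\cap(D\setminus\{\lambda_0\})$ is finite and $\lambda_0\notin\mathrm{acc}\,\sigma(T+F)$. This proves $\mathrm{acc}\,\sigma(T+F)\subseteq\mathrm{acc}\,\sigma(T)$, and the same reasoning applied to $T+F$ perturbed by $-F$ gives the opposite inclusion, whence $\sigma_{gD}(T+F)=\sigma_{gD}(T)$. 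The one point demanding care — ensuring that a finite rank perturbation cannot create infinitely many new spectral points accumulating at $\lambda_0$ — is precisely what the invariance of $W$ settles, since it forces the perturbation determinant to be rational rather than merely analytic.
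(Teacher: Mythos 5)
Your proposal is correct, and its overall strategy is exactly the paper's: identify $\sigma_{gD}(S)$ with $\mathrm{acc}\,\sigma(S)$ via Koliha's characterization and show the perturbation does not change the accumulation points of the spectrum. The difference is that where the paper disposes of both parts by citation or bare assertion --- part (i) by invoking Lemma 2.1 of Lee--Lee \cite{YH} for $\mathrm{acc}\,\sigma(T+F)=\mathrm{acc}\,\sigma(T)$, and part (ii) by simply stating $\sigma(T+Q)=\sigma(T)$ --- you supply self-contained proofs of both facts. Your argument for (ii), factoring $T+Q-\lambda I=(T-\lambda I)\bigl(I+(T-\lambda I)^{-1}Q\bigr)$ and using $r(ab)\le r(a)r(b)$ for commuting operators, is the standard proof of the assertion the paper takes for granted. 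Your argument for (i) is in effect a proof of the cited Lee--Lee lemma: the commutativity forces $W=\mathcal{R}(F)$ to be a finite-dimensional $T$-invariant subspace, the perturbation determinant collapses to the rational function $\det\bigl((T+F)|_W-\lambda\bigr)/\det\bigl(T|_W-\lambda\bigr)$ (nonzero, since both numerator and denominator are monic up to sign of degree $\dim W$), and a nonzero rational function has only finitely many zeros in any punctured disc missing $\sigma(T)$, so no new accumulation point can be created; the symmetric argument with $-F$ and $T+F$ gives the reverse inclusion. Everything checks, including the small points one must verify (that $(T-\lambda I)|_W$ is a bijection of $W$ for $\lambda\notin\sigma(T)$, and that invertibility of $I+K$ for finite-rank $K$ with range in the invariant subspace $W$ is detected by the determinant of the restriction to $W$). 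What your route buys is independence from the external reference; what the paper's route buys is brevity.
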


\begin{proof}  (i) From \cite[Lemma
2.1]{YH} we know that $\mbox{acc}\sigma(T+F)=\mbox{acc}\sigma(T).$
 Then $\lambda\not\in \mbox{acc} \sigma(T+F)$ $\Longleftrightarrow$
$\lambda\not\in \mbox{acc} \sigma(T).$ Hence $T+F-\lambda I$ is
generalized Drazin invertible if and only if $T-\lambda I$ is
generalized Drazin invertible, as desired.\\
(ii) Since $\sigma(T+Q)=\sigma(T)$ then
$\mbox{acc}\,\sigma(T+Q)=\mbox{acc}\sigma(T).$ Thus $T+Q-\lambda I$
is generalized Drazin invertible $\Longleftrightarrow$  $T-\lambda
I$ is. So $\sigma_{gD}(T+Q)=\sigma_{gD}(T).$
\end{proof}

\begin{thm} Let $R,T,U\in L(X)$ be such that $TRT=TUT.$ Then $$\sigma_{gD}(TR)=\sigma_{gD}(UT).$$
\end{thm}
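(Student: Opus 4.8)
The plan is to reduce everything to Koliha's characterization of generalized Drazin invertibility: an operator $S\in L(X)$ is generalized Drazin invertible if and only if $0\notin\mathrm{acc}\,\sigma(S)$, so that $\sigma_{gD}(S)=\mathrm{acc}\,\sigma(S)$. Applying this to $TR$ and to $UT$, it suffices to establish the set equality $\mathrm{acc}\,\sigma(TR)=\mathrm{acc}\,\sigma(UT)$, and for this I will first prove an extended Jacobson lemma, namely that under the hypothesis $TRT=TUT$ one has $\sigma(TR)\setminus\{0\}=\sigma(UT)\setminus\{0\}$.

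To prove the extended Jacobson lemma, I fix $\lambda\neq 0$ and assume $\lambda I-UT$ is invertible with inverse $W=(\lambda I-UT)^{-1}$; the claim will be that $V:=\lambda^{-1}(I+TWR)$ is a two-sided inverse of $\lambda I-TR$. The identity $(\lambda I-TR)V=I$ falls out by a short computation that applies the hypothesis once, in the form $TRTWR=TUTWR$, together with the resolvent identity $UTW=\lambda W-I$. The identity $V(\lambda I-TR)=I$ is the more delicate half: here the term $TWR\,TR$ must be reorganized, and I expect to obtain the telescoping identity $\lambda\,TWR-TWR\,TR=TR$ from the algebraic relation $T(UT)^{n}R\,TR=(TU)^{n+1}TR$ (valid for every $n$, each step using $TRT=TUT$), expanded through the Neumann series of $W$ for large $|\lambda|$ and then propagated to the relevant $\lambda$ by analyticity. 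This gives the inclusion $\sigma(TR)\setminus\{0\}\subseteq\sigma(UT)\setminus\{0\}$. For the reverse inclusion I would apply the same construction with the roles of $R$ and $U$ interchanged (the hypothesis $TRT=TUT$ is symmetric in $R$ and $U$), and combine it with the classical Jacobson lemma $\sigma(TR)\setminus\{0\}=\sigma(RT)\setminus\{0\}$ and $\sigma(UT)\setminus\{0\}=\sigma(TU)\setminus\{0\}$; chaining $\lambda I-TR$ invertible $\Leftrightarrow\lambda I-RT$ invertible $\Rightarrow\lambda I-TU$ invertible $\Leftrightarrow\lambda I-UT$ invertible closes the second inclusion.

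With $\sigma(TR)\setminus\{0\}=\sigma(UT)\setminus\{0\}$ established, I pass to accumulation points. For every $\lambda\neq 0$ a sufficiently small punctured disc about $\lambda$ avoids $0$, so it meets $\sigma(TR)$ and $\sigma(UT)$ in the same set and hence $\lambda\in\mathrm{acc}\,\sigma(TR)\Leftrightarrow\lambda\in\mathrm{acc}\,\sigma(UT)$. Moreover $0\in\mathrm{acc}\,\sigma(TR)$ precisely when $0$ is a limit of points of the common set $\sigma(TR)\setminus\{0\}=\sigma(UT)\setminus\{0\}$, which is the same condition as $0\in\mathrm{acc}\,\sigma(UT)$, since adjoining or deleting the single point $0$ does not affect whether $0$ is an accumulation point. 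Therefore $\mathrm{acc}\,\sigma(TR)=\mathrm{acc}\,\sigma(UT)$, and the first-step reduction yields $\sigma_{gD}(TR)=\sigma_{gD}(UT)$.

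The main obstacle is the left-inverse verification $V(\lambda I-TR)=I$ in the extended Jacobson lemma, where the hypothesis must be inserted inside the expression $TWR\,TR$ rather than at an outer position; the Neumann-series bookkeeping producing the telescoping identity is the technical heart of the argument. By contrast, the symmetric direction via the classical Jacobson lemma, the transfer to accumulation points, and the appeal to Koliha's characterization $\sigma_{gD}(S)=\mathrm{acc}\,\sigma(S)$ are routine.
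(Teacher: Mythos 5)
Your overall architecture is exactly the paper's: reduce to Koliha's characterization $\sigma_{gD}(S)=\mathrm{acc}\,\sigma(S)$, establish $\sigma(TR)\setminus\{0\}=\sigma(UT)\setminus\{0\}$, and transfer accumulation points (the paper simply cites Corach--Duggal--Harte, \emph{Extensions of Jacobson's lemma}, for the spectral identity). Your accumulation-point transfer and the Koliha reduction are correct, and your right-inverse verification $(\lambda I-TR)V=I$ with $V=\lambda^{-1}(I+TWR)$ is a clean, purely algebraic computation. The problem is the left-inverse half, and the gap there is genuine. The identity $\lambda TWR-TWR\,TR=TR$ that you derive from the Neumann series holds, by that argument, only for $|\lambda|>\|UT\|$; the function $\lambda\mapsto \lambda TW(\lambda)R-TW(\lambda)R\,TR-TR$ is analytic on $\rho(UT)$, but $\rho(UT)$ need not be connected, so the identity theorem propagates the vanishing only to the \emph{unbounded} component of $\rho(UT)$. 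For $\lambda\neq 0$ lying in a bounded component (a hole of $\sigma(UT)$ --- a situation that actually occurs for the shift-type operators pervading this paper, e.g.\ when $\sigma(UT)$ is a circle or an annulus), your argument produces only a right inverse of $\lambda I-TR$, and in infinite dimensions right invertibility does not imply invertibility. The same defect infects the middle implication of your reverse chain, since it is the $R\leftrightarrow U$-swapped instance of the same construction, so combining the two directions does not close the gap.

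The statement you are after is true on every component, and two purely algebraic repairs are available. Either prove injectivity directly: if $TRx=\lambda x$ with $x\neq 0$ and $\lambda\neq 0$, then $\lambda^2x=(TRT)(Rx)=(TUT)(Rx)=TU(\lambda x)$, so $TUx=\lambda x$, whence $Ux\neq 0$ and $UT(Ux)=\lambda Ux$, contradicting the invertibility of $\lambda I-UT$; injectivity plus your surjectivity (from the right inverse) gives invertibility by the open mapping theorem. Or stay entirely algebraic: from classical Jacobson, $d=(\lambda I-TU)^{-1}$ exists, and $(I+Rd T)(\lambda I-RT)=\lambda I-RT+\lambda RdT-RdTUT=\lambda I-RT+Rd(\lambda I-TU)T=\lambda I$, which shows $\lambda I-RT$ is left invertible, and the standard one-sided Jacobson transfer then makes $\lambda I-TR$ left invertible. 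Either patch, inserted at the left-inverse step, makes your proof complete and self-contained, which is more than the paper itself provides.
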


\begin{proof}
Since
$\sigma(TR)\setminus\{0\}=\sigma(UT)\setminus\{0\}$, by \cite[Theorem 1]{Corach}, then it is
enough to show that $TR$ is generalized Drazin invertible
$\Longleftrightarrow$  $UT$ is. Assume that
$0\not\in\sigma_{gD}(TR)$, then $0\not\in\mbox{acc}\sigma(TR).$
Therefore  $TR-\mu I$ is invertible for all small $\mu\neq 0.$ Hence
$UT-\mu I$ is
 invertible for all small $\mu\neq 0.$ So $0\not\in \mbox{acc} \sigma(UT).$ Hence $UT$
 is generalized Drazin invertible $\Longleftrightarrow$ $TR$ is. \end{proof}

 In particular if $R=U$ we get
\begin{cor}\label{ceq12} Let $R,T\in L(X)$ then $$\sigma_{gD}(TR)=\sigma_{gD}(RT).$$
\end{cor}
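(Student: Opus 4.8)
The plan is to read off the corollary as the diagonal case $U=R$ of the theorem immediately preceding it. Under this substitution the hypothesis $TRT=TUT$ collapses to the tautology $TRT=TRT$, which holds for every pair $R,T\in L(X)$ with no additional assumption; hence the theorem applies unconditionally. Its conclusion $\sigma_{gD}(TR)=\sigma_{gD}(UT)$ then reads $\sigma_{gD}(TR)=\sigma_{gD}(RT)$, which is precisely the asserted identity. So the proof is a one-line specialization and carries no genuine difficulty.

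If one prefers a self-contained argument that does not invoke the theorem, I would instead combine Koliha's characterization with the classical fact that $TR$ and $RT$ share the same nonzero spectrum. Concretely, since $0\notin\mbox{acc}\,\sigma(S)$ characterizes generalized Drazin invertibility, one has $\sigma_{gD}(S)=\mbox{acc}\,\sigma(S)$ for every $S\in L(X)$. Because $\sigma(TR)$ and $\sigma(RT)$ differ at most by the single point $0$, their sets of accumulation points coincide: any $\lambda\neq 0$ has a neighborhood avoiding $0$, so adjoining or deleting $0$ cannot change whether $\lambda$ is an accumulation point, while $0$ is an accumulation point of one set exactly when it is of the other. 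Therefore $\mbox{acc}\,\sigma(TR)=\mbox{acc}\,\sigma(RT)$, and the equality $\sigma_{gD}(TR)=\sigma_{gD}(RT)$ follows at once.

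The only step demanding the slightest care is the accumulation-set comparison in the alternative route, namely the elementary observation that two subsets of $\mathbb{C}$ differing by at most one point have identical accumulation points; I would state this explicitly rather than leave it implicit. In the primary route via the theorem there is effectively no obstacle, since the verification of the hypothesis is automatic for $U=R$.
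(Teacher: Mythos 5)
Your primary route—specializing the preceding theorem to $U=R$, where the hypothesis $TRT=TUT$ becomes automatic—is exactly how the paper derives this corollary (it states ``In particular if $R=U$ we get'' immediately before it). Your alternative self-contained argument via $\sigma_{gD}(S)=\mbox{acc}\,\sigma(S)$ and the fact that $\sigma(TR)$ and $\sigma(RT)$ agree off $0$ is also sound and essentially reproduces the paper's proof of that theorem, so there is nothing to correct.
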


 Since the equality $\mathcal{S}(TR)=\mathcal{S}(UT)$ always holds (see \cite[Theorem
 9]{Corach}), then it follows from Theorem \ref{thm1} that
   $\sigma_{pBW}(TR)\cup[\mathcal{S}(TR)\cap\mathcal{S}(R^*T^*)]=\sigma_{pBW}(UT)
   \cup[\mathcal{S}(TR)\cap\mathcal{S}(R^*T^*)].$
 In particular we get from last theorem that for $R$ and $T\in L(X)$, $\sigma_{gD}(TR)=\sigma_{gD}(RT)\mbox{ and }\sigma_{pBW}(TR)\cup(\mathcal{S}
  (TR)\cap\mathcal{S}(R^*T^*))=\sigma_{pBW}(RT)\cup[\mathcal{S}(TR)\cap\mathcal{S}(R^*T^*)].$

\begin{thm} Let $T\in L(X)$. Then
$$ \displaystyle{\bigcap_{F\in {\mathcal
F}(X)}}\sigma_{gD}(T+F)\subset\sigma_{pBW}(T).$$ \end{thm}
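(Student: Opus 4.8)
The plan is to prove the inclusion by contraposition: I would show that if $\lambda\notin\sigma_{pBW}(T)$, then $\lambda\notin\bigcap_{F\in\mathcal{F}(X)}\sigma_{gD}(T+F)$, which amounts to producing a single finite rank operator $F$ for which $T+F-\lambda I$ is generalized Drazin invertible. By translating, there is no loss of generality in taking $\lambda=0$. Thus I start from the hypothesis that $T$ is pseudo B-Weyl, so that by Definition \ref{dfn1} we may write $T=T_1\oplus T_2$ on a decomposition $X=X_1\oplus X_2$ into closed $T$-invariant subspaces, where $T_1$ is a Weyl operator on $X_1$ and $T_2$ is quasi-nilpotent on $X_2$.

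The central idea is to repair only the Weyl summand. Since $T_1$ is Weyl, we have $0\notin\sigma_W(T_1)$, and the characterization recalled in the Introduction, namely $\sigma_W(T_1)=\bigcap_{G\in\mathcal{F}(X_1)}\sigma(T_1+G)$, guarantees the existence of a finite rank operator $F_1\in\mathcal{F}(X_1)$ such that $T_1+F_1$ is invertible. I would then set $F:=F_1\oplus 0$ on $X=X_1\oplus X_2$, which is again a finite rank operator, so $F\in\mathcal{F}(X)$. With respect to the same decomposition we obtain $T+F=(T_1+F_1)\oplus T_2$, where $T_1+F_1$ is invertible and $T_2$ is quasi-nilpotent.

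It then remains only to invoke Koliha's characterization recalled in the Preliminaries: an operator is generalized Drazin invertible precisely when it decomposes as an invertible part direct sum a quasi-nilpotent part. Hence $T+F$ is generalized Drazin invertible, that is, $0\notin\sigma_{gD}(T+F)$, and therefore $0\notin\bigcap_{F\in\mathcal{F}(X)}\sigma_{gD}(T+F)$. This establishes the desired inclusion.

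I do not expect a serious obstacle here; the one point requiring care is ensuring that the finite rank correction respects the direct sum structure. Padding $F_1$ by the zero operator on $X_2$ is exactly what keeps $F$ of finite rank while leaving the quasi-nilpotent summand $T_2$ untouched, so that the perturbed operator still splits as invertible $\oplus$ quasi-nilpotent. The argument is essentially the finite rank analogue, at the level of the Koliha decomposition, of the classical fact that Weyl operators become invertible after a finite rank perturbation.
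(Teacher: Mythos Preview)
Your argument is correct and is essentially identical to the paper's proof: both take $\lambda\notin\sigma_{pBW}(T)$, decompose $T-\lambda I=T_1\oplus T_2$ with $T_1$ Weyl and $T_2$ quasi-nilpotent, find a finite rank $F_1$ making $T_1+F_1$ invertible, and set $F=F_1\oplus 0$ so that $(T-\lambda I)+F$ is invertible $\oplus$ quasi-nilpotent, hence generalized Drazin invertible. The only cosmetic difference is that the paper cites \cite[Theorem 6.5.2]{Harte} for the existence of $F_1$, whereas you invoke the intersection formula $\sigma_W(T_1)=\bigcap_{G\in\mathcal{F}(X_1)}\sigma(T_1+G)$ recalled in the Introduction; these are equivalent justifications.
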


\begin{proof} Let $\lambda\notin\sigma_{pBW}(T)$ arbitrary, then $T-\lambda I$ is pseudo B-Weyl
 operator. Therefore $X=X_1\oplus X_2$ and  $T-\lambda I=T_1\oplus T_2$
 relatively to this decomposition, with
 $T_1$ is  Weyl operator and $T_2$ is quasi-nilpotent operator.
  By \cite[Theorem 6.5.2]{Harte} there exists a finite rank operator
  $F_1$ such that $T_1+F_1$ is invertible. Let $F=F_1\oplus 0.$ Then
 $F$ is a finite rank operator, $(T-\lambda I)+F=(T_1+F_1)\oplus T_2$
 is generalized Drazin invertible and
 $\lambda\notin\displaystyle{\bigcap_{F\in {\mathcal F}(X)}}\sigma_{gD}(T+F).$ \end{proof}

We would like to finish this section with the following

\smallskip
 \noindent {\bf Question:} {\it Is it
 true that}
 $\sigma_{pBW}(T)=\displaystyle{\bigcap_{F\in
{\mathcal F}(X)}}\sigma_{gD}(T+F)?$\\

\section{Generalized Drazin invertibility for operator matrices}

For bounded linear operators $A\in L(X),B\in L(Y)$ and
$C\in L(Y,X),$ by $M_C$ we denote  the operator matrices
$
M_C = \left(\begin{array}{cc}
   A &C \\
   0 & B
\end{array}\right)$ defined  on $X\oplus Y.$

It is well known that, in the case of infinite dimensional, the
inclusion $\sigma(M_C)\subset\sigma(A)\cup\sigma(B)$ may be strict .
Hence several authors have been  interested by the defect set
$[\sigma_*(A)\cup\sigma_*(B)]\setminus\sigma_*(M_C)$ where
$\sigma_*$ runs different type spectra, see for instance
\cite{Barnes1, Boumaz, DZ, Dug, Lb, Lee, Zg2, ZZ, zguitti2010,
Zg2013, Zhang, Zhang2} and the references therein.

\smallskip
We begin this section by proving that the  generalized Drazin
spectrum of a direct sum is the union of generalized Drazin spectra
of its summands, and that  this result does not hold, in general,
for
 the generalized B-Weyl spectrum.

 \begin{prop}\label{prop2} Let $A\in L(X)$ and $B\in L(Y)$. Then
$$\sigma_{gD}(M_{0})=\sigma_{gD}(A)\cup\sigma_{gD}(B).$$\end{prop}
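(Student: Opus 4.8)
The plan is to prove the equality $\sigma_{gD}(M_0)=\sigma_{gD}(A)\cup\sigma_{gD}(B)$ by working with the complement characterization of generalized Drazin invertibility, namely that $\lambda\notin\sigma_{gD}(M_0)$ iff $0\notin\mbox{acc}\,\sigma(M_0-\lambda I)$, and exploiting the fact that for the direct sum $M_0=A\oplus B$ we have the clean spectral identity $\sigma(M_0)=\sigma(A)\cup\sigma(B)$. Since $M_0-\lambda I=(A-\lambda I)\oplus(B-\lambda I)$, it suffices (after the usual reduction to $\lambda=0$) to show that $M_0$ is generalized Drazin invertible if and only if both $A$ and $B$ are.

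For the forward inclusion $\sigma_{gD}(M_0)\subseteq\sigma_{gD}(A)\cup\sigma_{gD}(B)$, I would assume $0\notin\sigma_{gD}(A)$ and $0\notin\sigma_{gD}(B)$. By Koliha's characterization, $A=A_1\oplus A_2$ with $A_1$ invertible and $A_2$ quasi-nilpotent on $X=X_1\oplus X_2$, and likewise $B=B_1\oplus B_2$ with $B_1$ invertible and $B_2$ quasi-nilpotent on $Y=Y_1\oplus Y_2$. Regrouping, $M_0=(A_1\oplus B_1)\oplus(A_2\oplus B_2)$; the first summand is invertible and the second is quasi-nilpotent (a direct sum of two commuting-in-the-trivial-sense quasi-nilpotents on a direct sum is quasi-nilpotent, since its spectrum is the union $\{0\}\cup\{0\}=\{0\}$). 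Hence $M_0$ is generalized Drazin invertible, giving $0\notin\sigma_{gD}(M_0)$. The reverse inclusion $\sigma_{gD}(A)\cup\sigma_{gD}(B)\subseteq\sigma_{gD}(M_0)$ follows most transparently from the accumulation-point formulation: if $0\notin\mbox{acc}\,\sigma(M_0)=\mbox{acc}\,[\sigma(A)\cup\sigma(B)]$, then a fortiori $0\notin\mbox{acc}\,\sigma(A)$ and $0\notin\mbox{acc}\,\sigma(B)$, since each of $\sigma(A)$ and $\sigma(B)$ is a subset of $\sigma(M_0)$ and accumulation points of a subset are accumulation points of the whole. Therefore both $A$ and $B$ are generalized Drazin invertible.

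I anticipate that the entire argument is essentially bookkeeping once the two ingredients $\sigma(A\oplus B)=\sigma(A)\cup\sigma(B)$ and Koliha's $0\notin\mbox{acc}\,\sigma(\cdot)$ criterion are in hand; the only point requiring a moment's care is the verification that the direct sum $A_2\oplus B_2$ of two quasi-nilpotent operators is again quasi-nilpotent. This is immediate from $\sigma(A_2\oplus B_2)=\sigma(A_2)\cup\sigma(B_2)=\{0\}$, so it is not a genuine obstacle. In fact, the cleanest presentation avoids the decomposition entirely and argues purely at the level of accumulation points: since $\sigma(M_0)=\sigma(A)\cup\sigma(B)$, one has $\mbox{acc}\,\sigma(M_0)=\mbox{acc}\,\sigma(A)\cup\mbox{acc}\,\sigma(B)$, and hence $0\notin\mbox{acc}\,\sigma(M_0)$ is equivalent to the conjunction $0\notin\mbox{acc}\,\sigma(A)$ and $0\notin\mbox{acc}\,\sigma(B)$. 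This single equivalence yields both inclusions simultaneously and is the route I would write up.
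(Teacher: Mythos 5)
Your proposal is correct, and the ``cleanest presentation'' you settle on at the end --- reducing everything to the identity $\mathrm{acc}\,\sigma(A)\cup\mathrm{acc}\,\sigma(B)=\mathrm{acc}[\sigma(A)\cup\sigma(B)]$ together with Koliha's accumulation-point criterion --- is exactly the paper's proof. The intermediate decomposition argument via $M_0=(A_1\oplus B_1)\oplus(A_2\oplus B_2)$ is also valid but, as you note yourself, unnecessary.
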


\begin{proof} Let $\lambda\not\in\sigma_{gD}(A)\cup\sigma_{gD}(B),$
then $\lambda\not\in\mbox{acc}\sigma(A)\cup\mbox{acc}\sigma(B).$ As
$\mbox{acc}\sigma(A)\cup\mbox{acc}\sigma(B)=\mbox{acc}[\sigma(A)\cup\sigma(B)],$
then $\lambda\not\in\mbox{acc}\sigma(M_{0}).$ So $\lambda\not\in
\sigma_{gD}(M_{0})$ and hence
$\sigma_{gD}(M_{0})\subset\sigma_{gD}(A)\cup\sigma_{gD}(B).$

Conversely, let $\lambda\not\in\sigma_{gD}(M_{0}),$ then
$\lambda\not\in\mbox{acc}\sigma(M_{0})=\mbox{acc}\sigma(A\oplus B).$
Therefore
$\lambda\not\in\mbox{acc}\sigma(A)\cup\mbox{acc}\sigma(B).$ Thus
$\lambda\not\in\sigma_{gD}(A)\cup\sigma_{gD}(B).$ Hence
$\sigma_{gD}(A)\cup\sigma_{gD}(B)\subset\sigma_{gD}(M_{0}).$ This
finishes the proof.\end{proof}

\begin{ex}\rm Let $R\in  L(X)$ and  $T\in L(X).$ Let $A$ be the operator defined on $X\oplus X$ by

$$A = \left(\begin{array}{cc}
   0 &T \\
 R& 0
\end{array}\right),$$  then $A^2=\left(\begin{array}{cc}
   TR &0 \\
 0& RT
\end{array}\right).$ Thus  it follows from the above proposition that $\sigma_{gD}(A^2)=\sigma_{gD}(TR)\cup\sigma_{gD}(RT)$, which equals to $\sigma_{gD}(TR)$ by Corollary \ref{ceq12}.
Therefore $\sigma_{gD}(A)=\sqrt{\sigma_{gD}(TR)}.$
\end{ex}

\begin{rem}\rm

In general,   the equality proved in Proposition \ref{prop2} for
 the generalized spectrum does not hold for the pseudo B-Weyl spectrum.
For this, let $S$ be the unilateral unweighted shift on
$l^2(\mathbb{N})$ and set $A=S$ and $B=S^*.$ Since $A$ and $B^*$
have  SVEP then
$\sigma(A)=\sigma_{gD}(A)=\sigma_{pBW}(A)=\sigma(B)=\sigma_{gD}(B)=\sigma_{pBW}(B)=
 \{\lambda\in\mathbb{C}: |\lambda|\leq 1\},$ while $0\not\in\sigma_{pBW}(M_0).$
\end{rem}

Generally, the study of generalized Drazin invertibility for upper
triangular operator matrices was firstly investigated by D. S.
Djordjevi\'c and P. S. Stanimirovi\'c \cite{DjCzech}. They proved in
particular that
\begin{eqnarray}\sigma_{gD}(M_C)\subset\sigma_{gD}(A)\cup\sigma_{gD}(B)\mbox{
for every }C\in L(Y,X).\label{eq1}\end{eqnarray} But this inclusion
may be strict as we can see in  the following example.

\begin{ex}\label{ex2}\rm  Let $A=S$
 be the unilateral shift on $l^2(\mathbb{N})$ and let $B=S^*$ and
  $ C = I- SS^*.$ Then $M_C$ is unitary and hence we get
  $$\sigma_{gD}(M_C)=\{\lambda\in\mathbb{C} :|\lambda|=1\}
   \mbox{ and } \sigma_{gD}(A)\cup\sigma_{gD}(B)=\{\lambda\in\mathbb{C} : 0\leq |\lambda|\leq 1\}.$$

 \end{ex}

The defect set
$(\sigma_{gD}(A)\cup\sigma_{gD}(B))\setminus\sigma_{gD}(M_C)$ has
been studied very recently in \cite{Zhang2}, more precisely, it was
proved that this defect is the union of certain holes in
$\sigma_{gD}(M_C)$ which happen to be subsets of
$\sigma_{gD}(A)\cap\sigma_{gD}(B)$. We will explicit in what follows
the defect set $[\sigma_{gD}(A)\cup\sigma_{gD}(B)]\setminus
\sigma_{gD}(M_C)$ by means of localized SVEP. This result will lead
us to a necessary condition that ensures the equality desired (see
Corollary \ref{cdSVEP} bellow).

\begin{thm}\label{thm3} For $A\in L(X),\,B\in  L(Y)$ and $C\in L(Y,X)$
 we have
 $$\sigma_{gD}(M_C)\cup[{\mathcal S}(A^*)\cap
 {\mathcal S}(B)]=\sigma_{gD}(A)\cup\sigma_{gD}(B).$$ \end{thm}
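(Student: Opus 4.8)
The plan is to prove the inclusion $\sigma_{gD}(M_C)\cup[{\mathcal S}(A^*)\cap{\mathcal S}(B)]\subset\sigma_{gD}(A)\cup\sigma_{gD}(B)$ first, then the reverse. For the forward inclusion, the piece $\sigma_{gD}(M_C)\subset\sigma_{gD}(A)\cup\sigma_{gD}(B)$ is already available as equation \eqref{eq1}, so it remains to show ${\mathcal S}(A^*)\cap{\mathcal S}(B)\subset\sigma_{gD}(A)\cup\sigma_{gD}(B)$. I would argue by contraposition: if $\lambda\notin\sigma_{gD}(A)\cup\sigma_{gD}(B)$, then both $A-\lambda I$ and $B-\lambda I$ are generalized Drazin invertible, so $\lambda$ is not an accumulation point of either $\sigma(A)$ or $\sigma(B)$; in particular $A-\lambda I$ (hence $A^*-\lambda I$) and $B-\lambda I$ each have SVEP at $\lambda$ (SVEP holds automatically at isolated points of the spectrum and at points of the resolvent set). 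This forces $\lambda\notin{\mathcal S}(A^*)$ and $\lambda\notin{\mathcal S}(B)$, so $\lambda\notin{\mathcal S}(A^*)\cap{\mathcal S}(B)$, giving the inclusion.

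The substance of the theorem is the reverse inclusion $\sigma_{gD}(A)\cup\sigma_{gD}(B)\subset\sigma_{gD}(M_C)\cup[{\mathcal S}(A^*)\cap{\mathcal S}(B)]$. Again I would work by contraposition, taking $\lambda\notin\sigma_{gD}(M_C)\cup[{\mathcal S}(A^*)\cap{\mathcal S}(B)]$ and aiming to conclude $\lambda\notin\sigma_{gD}(A)\cup\sigma_{gD}(B)$; without loss of generality set $\lambda=0$. The hypothesis splits into two facts: $M_C$ is generalized Drazin invertible (so $0\notin\mbox{acc}\,\sigma(M_C)$, i.e.\ $M_C-\mu I$ is invertible for all small $\mu\neq0$), and $0\notin{\mathcal S}(A^*)\cap{\mathcal S}(B)$, so either $A^*$ has SVEP at $0$ or $B$ has SVEP at $0$. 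The key linear-algebra observation for the upper triangular structure is that invertibility of $M_{C}-\mu I$ forces $A-\mu I$ to be \emph{bounded below} (injective with closed range, equivalently $\mu\notin\sigma_{ap}(A)$) and $B-\mu I$ to be \emph{surjective} ($\mu\notin\sigma_s(B)$); dually, $A^*-\mu I$ is surjective and $B^*-\mu I$ is bounded below. Applying this for all small $\mu\neq0$ gives $0\notin\mbox{acc}\,\sigma_{ap}(A)$, $0\notin\mbox{acc}\,\sigma_s(B)$, and the dual statements.

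I would then play the two SVEP alternatives against these one-sided spectral facts, using the identity $\sigma(T)={\mathcal S}(T)\cup\sigma_s(T)$ recorded in the preliminaries (and its adjoint form). \textit{Case 1:} if $B$ has SVEP at $0$, then near $0$ the surjectivity of $B-\mu I$ upgrades to invertibility, because SVEP at $0$ combined with $0\notin\mbox{acc}\,\sigma_s(B)$ yields $0\notin\mbox{acc}\,\sigma(B)$ via the boundary relation ${\mathcal S}(B)\subset\sigma_p(B)$ and $\sigma(B)={\mathcal S}(B)\cup\sigma_s(B)$; this gives $0\notin\sigma_{gD}(B)$, and then generalized Drazin invertibility of $M_C$ together with invertibility of $B$ forces the same for $A$ (the diagonal $A$ inherits it since $M_0$ and $M_C$ share the generalized Drazin property in this situation), so $0\notin\sigma_{gD}(A)$ as well. \textit{Case 2:} if $A^*$ has SVEP at $0$, the argument is dual, now pairing SVEP of $A^*$ with $0\notin\mbox{acc}\,\sigma_s(A^*)$ to get $0\notin\sigma_{gD}(A^*)=\sigma_{gD}(A)$, and then deducing $0\notin\sigma_{gD}(B)$.

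The step I expect to be the main obstacle is cleanly extracting invertibility of a diagonal entry from generalized Drazin invertibility of $M_C$ once the other entry is controlled: the direct-sum characterization of generalized Drazin invertibility does not transfer verbatim to the triangular case because of the corner $C$, so the passage from $0\notin\mbox{acc}\,\sigma(M_C)$ together with the one-sided information to $0\notin\mbox{acc}\,\sigma(A)$ (resp.\ $\sigma(B)$) must be handled via the punctured-disk invertibility of $M_C-\mu I$ and the one-sided spectral inclusions, rather than by a naive splitting. Making the two cases genuinely symmetric under the $M_C\leftrightarrow M_C^*$ duality (which interchanges the roles of $A$ and $B$ and of bounded-below and surjective) is what keeps this manageable.
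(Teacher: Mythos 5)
Your proposal is correct and takes essentially the same route as the paper's proof: reduce to $\lambda=0$, use invertibility of $M_C-\mu I$ on a punctured disc to get $0\notin\mbox{acc}\,\sigma_{ap}(A)\cup\mbox{acc}\,\sigma_s(B)$, split into the cases $0\notin{\mathcal S}(A^*)$ and $0\notin{\mathcal S}(B)$, upgrade the one-sided spectral information to generalized Drazin invertibility of one diagonal entry via $\sigma(T)={\mathcal S}(T)\cup\sigma_s(T)$, and then transfer it to the other entry (the paper does this last step by citing Lemma 2.5 of Zhang--Zhong--Lin). Even the step you single out as delicate --- passing from ``$0$ is isolated in $\sigma_s(B)$'' together with SVEP at $0$ to ``$0$ is isolated in $\sigma(B)$'' --- is treated with the same brevity in the paper's own argument, so there is no substantive divergence.
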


\begin{proof}  Let
$\lambda\in(\sigma_{gD}(A)\cup\sigma_{gD}(B))\setminus\sigma_{gD}(M_C).$
We can assume   without loss of generality  that $\lambda=0.$ Then
$M_C$ is generalized Drazin invertible and hence $0\not\in
\mbox{acc} \sigma(M_C).$ Then there exists $\varepsilon>0$ such that
 $M_C-\mu I$ is invertible for every  $0<|\mu|<\varepsilon.$  Thus
 for every  $0<|\mu|<\varepsilon,$
$A-\mu I$ is left invertible and $B-\mu I$ is right invertible. So
$0\not\in \mbox{acc}\sigma_{ap}(A)\cup \mbox{acc}\sigma_s(B).$ For
the sake of contradiction assume that $0\notin {\mathcal S}(A^*)\cap
 {\mathcal S}(B).$

 {\it Case 1.}  $0\not\in {\mathcal S}(A^*)$: If
$0\in\sigma(A^*)$ then since $\sigma(A^*)={\mathcal S}(A^*)\cup\sigma_{s}(A^*)$ we have  $0\in\sigma_s(A^*).$ As
$0\not\in \mbox{acc}\sigma_{ap}(A)=\mbox{acc}\sigma_s(A^*),$ then
$0$ is an isolated point of $\sigma_s(A^*).$ Thus  $0$ is an
isolated point of $\sigma(A^*)=\sigma(A).$ Hence  $A$ is generalized
Drazin invertible. Summing up: $M_C$ and $A$ are generalized Drazin
invertible, which implies from \cite[Lemma 2.5]{Zhang2} that $B$ is
also generalized Drazin invertible. But this is impossible. Now if
$0\not\in\sigma(A^*)$ then $0\not\in\sigma_{gD}(A).$ Hence $B$ will
be  generalized Drazin invertible, and this is a contradiction.

 {\it
Case 2.} $0\not\in {\mathcal S}(B)$: If $0\not\in\sigma(B)$ then
$0\not\in\sigma_{gD}(B).$ So $B$ is generalized Drazin invertible,
and since $M_{C}$ is generalized Drazin invertible it follows from
\cite[Lemma 2.5]{Zhang2} that  $A$ is generalized Drazin invertible.
But this is a contradiction. If $0\in\sigma(B)$ then
$0\in\sigma_s(B).$ As $0\not\in \mbox{acc}\sigma_s(B)$ then
$0\in\mbox{iso}\sigma_s(B),$ therefore $0\in\mbox{iso}\sigma(B).$ So
$B$ is  generalized Drazin invertible and since $M_{C}$ is
generalized Drazin invertible, it then follows that $A$ is
generalized Drazin invertible. But this is
a contradiction.\\
In the two cases we have
$\sigma_{gD}(A)\cup\sigma_{gD}(B)\subset\sigma_{gD}(M_C)\cup[{\mathcal S}(A^*)\cap{\mathcal S}(B)].$ Since the opposite inclusion is always
true then
$\sigma_{gD}(A)\cup\sigma_{gD}(B)=\sigma_{gD}(M_C)\cup[{\mathcal S}(A^*)\cap{\mathcal S}(B)].$ Hence the theorem is
proved.\end{proof}

 Now, in the next corollary, we give a sufficient condition which ensures that
$\sigma_{gD}(M_C)=\sigma_{gD}(A)\cup\sigma_{gD}(B)$ for every $ C\in L(Y,X).$ We notice that the condition
 ${\mathcal S}(A^*)\cap
 {\mathcal S}(B)=\emptyset$ is not satisfied  for
   operators $A$ and $B$ defined in  Example \ref{ex2}.

\begin{cor}\label{cdSVEP}Let $A\in L(X)$ and Let $B\in L(Y).$ If ${\mathcal S}(A^*)\cap
 {\mathcal S}(B)=\emptyset$
then for every $ C\in L(Y,X),$
$\sigma_{gD}(M_C)=\sigma_{gD}(A)\cup\sigma_{gD}(B).$ In particular, if
$A^*$ or $B$ has  SVEP, then $\sigma_{gD}(M_C)=\sigma_{gD}(A)\cup\sigma_{gD}(B).$ \end{cor}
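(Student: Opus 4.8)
The plan is to derive this corollary directly from Theorem~\ref{thm3}, which has just been established and gives the general identity
\[
\sigma_{gD}(M_C)\cup[{\mathcal S}(A^*)\cap{\mathcal S}(B)]=\sigma_{gD}(A)\cup\sigma_{gD}(B)
\]
valid for \emph{every} $C\in L(Y,X)$. The corollary is the special case in which the correction term ${\mathcal S}(A^*)\cap{\mathcal S}(B)$ vanishes, so the proof is essentially a one-line substitution. First I would invoke the hypothesis ${\mathcal S}(A^*)\cap{\mathcal S}(B)=\emptyset$ and plug it into the left-hand side of the displayed equality of Theorem~\ref{thm3}; the union then collapses to $\sigma_{gD}(M_C)$, yielding $\sigma_{gD}(M_C)=\sigma_{gD}(A)\cup\sigma_{gD}(B)$ for every $C$. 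Since $C$ was arbitrary, the conclusion holds for all $C\in L(Y,X)$ as claimed.

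For the ``in particular'' statement I would recall from the preliminaries that a single operator $T$ having SVEP is equivalent to ${\mathcal S}(T)=\emptyset$. Thus if $A^*$ has SVEP then ${\mathcal S}(A^*)=\emptyset$, and if $B$ has SVEP then ${\mathcal S}(B)=\emptyset$; in either case the intersection ${\mathcal S}(A^*)\cap{\mathcal S}(B)$ is empty, so the hypothesis of the first part is met and the equality follows. This reduces the second assertion to the first with no further work.

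There is no real obstacle here, since all the heavy lifting—the inclusion \eqref{eq1} of Djordjevi\'c--Stanimirovi\'c and the reverse containment obtained through the localized-SVEP case analysis with \cite[Lemma 2.5]{Zhang2}—has already been absorbed into Theorem~\ref{thm3}. The only point worth stating carefully is that the emptiness of the intersection does not require either ${\mathcal S}(A^*)$ or ${\mathcal S}(B)$ to be empty on its own; the weaker disjointness condition suffices, and it is genuinely weaker, which is precisely why the first part of the corollary is more general than the SVEP-based sufficient condition in the ``in particular'' clause.
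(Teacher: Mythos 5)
Your proposal is correct and matches the paper's (implicit) argument exactly: the corollary is stated as an immediate consequence of Theorem~\ref{thm3}, obtained by letting the term ${\mathcal S}(A^*)\cap{\mathcal S}(B)$ vanish, with the ``in particular'' clause following because SVEP for $A^*$ or $B$ forces the corresponding set ${\mathcal S}(\cdot)$ to be empty. Nothing further is needed.
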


\begin{ex}\rm Let $S$ be the unilateral shift operator on $l^2(\mathbb{N})$ and  we define operators
 $A=(S\oplus S^*)+I$ and $B=(S\oplus S^*)-I$ on $l^2(\mathbb{N})\oplus l^2(\mathbb{N}).$
 Then $$\sigma(A)=\{\lambda\in\mathbb{C} : 0\leq |\lambda-1|\leq 1\} \mbox{ , }
  \sigma(B)=\{\lambda\in\mathbb{C} : 0\leq |\lambda+1|\leq 1\}.$$ It
  follows that
$${\mathcal S}(A)=\{\lambda\in\mathbb{C} : 0\leq|\lambda-1|< 1\} \mbox{ , } {\mathcal S}(B)=\{\lambda\in\mathbb{C} : 0\leq|\lambda+1|< 1\}.$$
Hence  ${\mathcal S}(A^*)\cap {\mathcal S}(B)=\emptyset.$  Therefore
$\sigma_{gD}(M_C)=\sigma_{gD}(A)\cup\sigma_{gD}(B).$
 Note here that $A^*$ and $B$ do not have  SVEP.
\end{ex}

The equality $\sigma_{gD}(M_C)=\sigma_{gD}(A)\cup\sigma_{gD}(B)$
holds in particular, if we take $A=S^*$ or $B=S,$ since in this case
$A^*$ or $B$ has  SVEP. It also holds when $A$ and $B$ belong to the class of all normal or hyponormal operators in  Hilbert spaces, or the class of all compact operators in Banach spaces.

\begin{rem}\rm  Generally,  we do not have
$\sigma_{pBW}(M_C)=\sigma_{pBW}(A)\cup\sigma_{pBW}(B)$ even if $A^*$
or $B$ has SVEP. For instance, let $S$ be the unilateral unweighted
shift on $l^2(\mathbb{N})$. Let $A=S^*, $ $B=S$ and $C=I-SS^*.$
Since $A^*$ and $B$ have  SVEP, it follows from Corollary
\ref{cgBWgd} that $\sigma_{pBW}(A)=\sigma_{gD}(A)$ and
$\sigma_{pBW}(B)=\sigma_{gD}(B).$ Hence
$\sigma_{pBW}(A)\cup\sigma_{pBW}(B)=\{\lambda\in\mathbb{C} : 0\leq
|\lambda|\leq 1\}.$ Since $M_C$ is unitary then
$\sigma_{pBW}(M_C)=\{\lambda\in\mathbb{C} : |\lambda|= 1\}.$ Thus
$\sigma_{pBW}(M_C)\neq\sigma_{pBW}(A)\cup\sigma_{pBW}(B).$ Here $A$
and $B^*$ do not have  SVEP. However, we have the following
result.\end{rem}

\begin{prop} Let $A\in L(X)$ and  $B\in L(Y).$ If  $A$ and $B$ (or $A^*$ and $B^*$) have SVEP,
then for every $ C\in L(Y,X),$
$$\sigma_{pBW}(M_C)=\sigma_{pBW}(A)\cup\sigma_{pBW}(B).$$\end{prop}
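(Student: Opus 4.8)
The proposition asks to show that if $A$ and $B$ both have SVEP (or both $A^*$ and $B^*$ do), then the pseudo B-Weyl spectrum of the upper triangular operator matrix $M_C$ decouples as the union of the pseudo B-Weyl spectra of the diagonal entries, for every $C \in L(Y,X)$. The plan is to reduce everything to the already-proven generalized Drazin statement (Corollary \ref{cdSVEP}) together with the SVEP characterization of $\sigma_{pBW}$ from Corollary \ref{cgBWgd}, so that the pseudo B-Weyl spectra collapse onto the corresponding generalized Drazin spectra and the operator-matrix identity follows from Proposition \ref{prop2}-style reasoning.

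\begin{proof}
Assume first that $A$ and $B$ both have SVEP. Then, since $A$ has SVEP we in particular have ${\mathcal S}(A)=\emptyset$, and likewise ${\mathcal S}(B)=\emptyset$. By Corollary \ref{cgBWgd}, SVEP of $A$ forces $\sigma_{pBW}(A)=\sigma_{gD}(A)$ and SVEP of $B$ forces $\sigma_{pBW}(B)=\sigma_{gD}(B)$. Thus
$$\sigma_{pBW}(A)\cup\sigma_{pBW}(B)=\sigma_{gD}(A)\cup\sigma_{gD}(B).$$
It therefore suffices to show that $\sigma_{pBW}(M_C)=\sigma_{gD}(A)\cup\sigma_{gD}(B)$. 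The first ingredient is Corollary \ref{cdSVEP}: SVEP of $B$ gives ${\mathcal S}(B)=\emptyset$, hence a fortiori ${\mathcal S}(A^*)\cap{\mathcal S}(B)=\emptyset$, and so $\sigma_{gD}(M_C)=\sigma_{gD}(A)\cup\sigma_{gD}(B)$ for every $C$.

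The remaining step is to upgrade the generalized Drazin spectrum of $M_C$ to the pseudo B-Weyl spectrum of $M_C$, i.e.\ to verify $\sigma_{pBW}(M_C)=\sigma_{gD}(M_C)$. For this I would invoke Corollary \ref{cgBWgd} once more, applied to the single operator $M_C$: it is enough to check that ${\mathcal S}(M_C)\cap{\mathcal S}(M_C^*)=\emptyset$. Here is where I expect the main obstacle to lie, and where the two hypotheses split. When $A$ and $B$ both have SVEP, one uses the standard fact that SVEP of the two diagonal entries of an upper triangular operator matrix transmits to $M_C$: indeed if $A$ and $B$ have SVEP then $M_C$ has SVEP for every $C$ (this is a known localized-SVEP transmission result for triangular matrices, provable pointwise by comparing analytic solutions of $(M_C-\mu I)f(\mu)=0$ with those of $A-\mu I$ and $B-\mu I$). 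Hence ${\mathcal S}(M_C)=\emptyset$, which already yields ${\mathcal S}(M_C)\cap{\mathcal S}(M_C^*)=\emptyset$, and Corollary \ref{cgBWgd} gives $\sigma_{pBW}(M_C)=\sigma_{gD}(M_C)$. Combining the three displayed equalities finishes this case.

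For the symmetric hypothesis, suppose instead that $A^*$ and $B^*$ have SVEP. I would argue by duality. Passing to adjoints, $M_C^*$ is the lower triangular matrix with diagonal entries $A^*$ and $B^*$, equivalently an upper triangular matrix with diagonal $B^*,A^*$ after the obvious coordinate flip; since $A^*$ and $B^*$ have SVEP, the same transmission result shows $M_C^*$ has SVEP, so ${\mathcal S}(M_C^*)=\emptyset$ and again ${\mathcal S}(M_C)\cap{\mathcal S}(M_C^*)=\emptyset$, giving $\sigma_{pBW}(M_C)=\sigma_{gD}(M_C)$. On the diagonal side, SVEP of $A^*$ and $B^*$ yields $\sigma_{pBW}(A)=\sigma_{gD}(A)$ and $\sigma_{pBW}(B)=\sigma_{gD}(B)$ by Corollary \ref{cgBWgd}, using that $T$ is pseudo B-Weyl iff $T^*$ is. Finally, to obtain $\sigma_{gD}(M_C)=\sigma_{gD}(A)\cup\sigma_{gD}(B)$ I would apply Theorem \ref{thm3} together with the hypothesis: SVEP of $A^*$ gives ${\mathcal S}(A^*)=\emptyset$, so ${\mathcal S}(A^*)\cap{\mathcal S}(B)=\emptyset$ and Theorem \ref{thm3} yields the decoupling of $\sigma_{gD}(M_C)$. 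Chaining the equalities then completes the proof in both cases.
\end{proof}
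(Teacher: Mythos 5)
Your proof is correct and follows essentially the same route as the paper: both arguments rest on the transmission of SVEP from the diagonal entries to $M_C$ (the paper cites this as \cite[Proposition 3.1]{Zg2}), then chain $\sigma_{pBW}(M_C)=\sigma_{gD}(M_C)=\sigma_{gD}(A)\cup\sigma_{gD}(B)=\sigma_{pBW}(A)\cup\sigma_{pBW}(B)$ via Corollaries \ref{cgBWgd} and \ref{cdSVEP}. Your treatment of the adjoint case is just a more explicit version of the paper's ``goes similarly.''
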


\begin{proof} If $A$ and $B$ have  SVEP then $M_C$ has also  SVEP, see \cite[Proposition 3.1]{Zg2}. Hence
\begin{eqnarray*}
 \sigma_{pBW}(M_C)&=&\sigma_{gD}(M_C)\,\,\mbox{ (by Corollary \ref{cgBWgd})}\\
                 &=&\sigma_{gD}(A)\cup\sigma_{gD}(B)\,\,\mbox{ (by Corollary \ref{cdSVEP})}\\
                &=&\sigma_{pBW}(A)\cup\sigma_{pBW}(B)\,\,\mbox{ (by Corollary \ref{cgBWgd})}.
\end{eqnarray*}
The case of $A^*$ and $B^*$ have SVEP goes similarly.\end{proof}

In our next result, we are going to provide a new condition under which the
equality $\sigma_{gD}(M_C)=\sigma_{gD}(A)\cup\sigma_{gD}(B)$ holds.

\begin{prop}\label{p3.1} Let $A\in L(X),$ $B\in L(Y)$
 and $ C\in L(Y,X).$ If
  $\sigma_{pBW}(M_C)=\sigma_{pBW}(A)\cup\sigma_{pBW}(B),$ then
   $\sigma_{gD}(M_C)=\sigma_{gD}(A)\cup\sigma_{gD}(B).$\end{prop}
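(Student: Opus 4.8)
The plan is to argue by double inclusion, noting that only one direction is nontrivial. Since $\sigma_{gD}(M_C)\subset\sigma_{gD}(A)\cup\sigma_{gD}(B)$ always holds by \eqref{eq1}, it suffices to establish the reverse inclusion $\sigma_{gD}(A)\cup\sigma_{gD}(B)\subset\sigma_{gD}(M_C)$. By Theorem \ref{thm3} we have $\sigma_{gD}(A)\cup\sigma_{gD}(B)=\sigma_{gD}(M_C)\cup[{\mathcal S}(A^*)\cap{\mathcal S}(B)]$, so the whole problem reduces to the single containment ${\mathcal S}(A^*)\cap{\mathcal S}(B)\subset\sigma_{gD}(M_C)$. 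I would fix $\lambda\in{\mathcal S}(A^*)\cap{\mathcal S}(B)$ and assume, for contradiction, that $\lambda\notin\sigma_{gD}(M_C)$.

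The first key step is to squeeze four SVEP failures out of this assumption. Because $\sigma_{pBW}(M_C)\subset\sigma_{gD}(M_C)$ and, by hypothesis, $\sigma_{pBW}(M_C)=\sigma_{pBW}(A)\cup\sigma_{pBW}(B)$, the assumption $\lambda\notin\sigma_{gD}(M_C)$ forces $\lambda\notin\sigma_{pBW}(A)$ and $\lambda\notin\sigma_{pBW}(B)$. Next I would use the elementary fact that SVEP holds at every isolated point of the spectrum, which yields ${\mathcal S}(T)\subset\mbox{acc}\,\sigma(T)=\sigma_{gD}(T)$ for every operator; in particular $\lambda\in{\mathcal S}(A^*)\subset\sigma_{gD}(A^*)=\sigma_{gD}(A)$ and $\lambda\in{\mathcal S}(B)\subset\sigma_{gD}(B)$. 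Feeding these into Theorem \ref{thm1} applied to $A$ and to $B$, namely $\sigma_{gD}(A)=\sigma_{pBW}(A)\cup[{\mathcal S}(A)\cap{\mathcal S}(A^*)]$ and $\sigma_{gD}(B)=\sigma_{pBW}(B)\cup[{\mathcal S}(B)\cap{\mathcal S}(B^*)]$, together with $\lambda\notin\sigma_{pBW}(A)$ and $\lambda\notin\sigma_{pBW}(B)$, I would conclude $\lambda\in{\mathcal S}(A)\cap{\mathcal S}(A^*)$ and $\lambda\in{\mathcal S}(B)\cap{\mathcal S}(B^*)$. In particular $\lambda\in{\mathcal S}(A)$ and $\lambda\in{\mathcal S}(B^*)$.

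The second key step transfers the situation to $M_C$ and closes the contradiction. Applying Theorem \ref{thm1} to $M_C$ gives $\sigma_{gD}(M_C)=\sigma_{pBW}(M_C)\cup[{\mathcal S}(M_C)\cap{\mathcal S}(M_C^*)]$, so $\lambda\notin\sigma_{gD}(M_C)$ forces $\lambda\notin{\mathcal S}(M_C)\cap{\mathcal S}(M_C^*)$; that is, $M_C$ has SVEP at $\lambda$ or $M_C^*$ has SVEP at $\lambda$. Here I would invoke the standard fact that SVEP at a point is inherited by restrictions to closed invariant subspaces. Since $X\oplus\{0\}$ is $M_C$-invariant with $M_C|_{X\oplus\{0\}}=A$, SVEP of $M_C$ at $\lambda$ would give SVEP of $A$ at $\lambda$, contradicting $\lambda\in{\mathcal S}(A)$. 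Dually, writing $M_C^*=\left(\begin{smallmatrix} A^* & 0\\ C^* & B^*\end{smallmatrix}\right)$ on $X^*\oplus Y^*$, the subspace $\{0\}\oplus Y^*$ is $M_C^*$-invariant with restriction $B^*$, so SVEP of $M_C^*$ at $\lambda$ would give SVEP of $B^*$ at $\lambda$, contradicting $\lambda\in{\mathcal S}(B^*)$. Either way we reach a contradiction, proving ${\mathcal S}(A^*)\cap{\mathcal S}(B)\subset\sigma_{gD}(M_C)$ and hence the proposition.

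I expect the main obstacle to be the bookkeeping of the second paragraph: the hypothesis on $\sigma_{pBW}$ must be leveraged precisely to upgrade the two \emph{assumed} failures $\lambda\in{\mathcal S}(A^*)\cap{\mathcal S}(B)$ into the two \emph{complementary} failures $\lambda\in{\mathcal S}(A)$ and $\lambda\in{\mathcal S}(B^*)$, since it is exactly this latter pair that clashes with whichever of $M_C$, $M_C^*$ inherits SVEP. The remaining ingredients, the containment ${\mathcal S}(T)\subset\sigma_{gD}(T)$ and the inheritance of SVEP by restrictions to invariant subspaces, are routine, but I would state both explicitly to make the final dichotomy airtight.
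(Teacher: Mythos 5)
Your proof is correct, but it routes the argument differently from the paper. The paper argues directly: for $\lambda\notin\sigma_{gD}(M_C)$ it deduces $\lambda\notin\sigma_{pBW}(A)\cup\sigma_{pBW}(B)$, then uses Theorem \ref{thm1} exactly as you do to rule out $\lambda\in\sigma_{gD}(A)$ via the chain $\lambda\in{\mathcal S}(A)\cap{\mathcal S}(A^*)\subset{\mathcal S}(M_C)\subset\sigma_{gD}(M_C)$, and then disposes of $B$ by citing \cite[Lemma 2.5]{Zhang2} (generalized Drazin invertibility of $M_C$ and $A$ forces that of $B$). You instead fold everything into local spectral theory: you reduce the problem through Theorem \ref{thm3} to the single containment ${\mathcal S}(A^*)\cap{\mathcal S}(B)\subset\sigma_{gD}(M_C)$ and handle the $B$ side by the dual argument with $B^*$ and $M_C^*=\bigl(\begin{smallmatrix} A^* & 0\\ C^* & B^*\end{smallmatrix}\bigr)$, so you never need the external lemma from \cite{Zhang2}. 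Both proofs share the same engine: $\lambda\notin\sigma_{pBW}(A)$ together with $\lambda\in\sigma_{gD}(A)$ forces $\lambda\in{\mathcal S}(A)\cap{\mathcal S}(A^*)$ by Theorem \ref{thm1}, and SVEP at a point passes to restrictions on closed invariant subspaces. One remark: your case dichotomy is more elaborate than necessary. Since $\sigma_{gD}(M_C)=\mbox{acc}\,\sigma(M_C)$ and SVEP holds at every point outside the accumulation of the spectrum, the assumption $\lambda\notin\sigma_{gD}(M_C)$ already gives SVEP of $M_C$ (and of $M_C^*$) at $\lambda$ outright, not merely the disjunction you extract from Theorem \ref{thm1} applied to $M_C$; so your Case 1 alone --- $A$ inherits SVEP at $\lambda$, contradicting $\lambda\in{\mathcal S}(A)$ --- closes the argument, and the entire $B^*$/$M_C^*$ branch can be dropped. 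This is a simplification, not a flaw.
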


\begin{proof} Let $\lambda\notin\sigma_{gD}(M_C)$ arbitrary, then $M_C-\lambda I$ is generalized
 Drazin invertible. Hence  $\lambda\not\in\sigma_{pBW}(M_{C})=\sigma_{pBW}(A)\cup\sigma_{pBW}(B).$
  So $A-\lambda I$ and
  $B-\lambda I$ are pseudo B-Weyl operators. If
  $\lambda\in\sigma_{gD}(A)$ then
   form Theorem \ref{thm1} we have  $\lambda\in{\mathcal S}(A)\cap{\mathcal S}(A^*).$
  Hence $\lambda\in{\mathcal S}(A) \subset{\mathcal S}(M_C)\subset\sigma_{gD}(M_C).$
  But  this is  a contradiction.
  Therefore $\lambda\not\in\sigma_{gD}(A).$ From \cite[Lemma 2.5]{Zhang2} we conclude  that
  $\lambda\not\in\sigma_{gD}(B).$ Thus
  $\lambda\not\in\sigma_{gD}(A)\cup\sigma_{gD}(B)$. Hence
  $\sigma_{gD}(M_C)\supseteq\sigma_{gD}(A)\cup\sigma_{gD}(B).$
  Since $\sigma_{gD}(M_C)\subset\sigma_{gD}(A)\cup\sigma_{gD}(B)$ holds  with no restriction,
  then  $\sigma_{gD}(M_C)=\sigma_{gD}(A)\cup\sigma_{gD}(B).$\end{proof}

 One might expect that the converse  of Proposition \ref{p3.1} is true, but this is not true in general as shown in the following example.

\begin{ex}\rm Let $S$ be the unweighted unilateral shift on $l^2(\mathbb{N})$.
On $l^2(\mathbb{N})\otimes l^2(\mathbb{N})$ set $A=S\otimes I$,  $B=S^*\otimes I$ and

\[ C = \left(\begin{array}{cccc} 0 &  &  & \\
                                  & I-SS^* & & \\
                                   & & I-SS^* & \\
                                    & & & \ddots
                                                  \end{array}  \right).\]
Then $\sigma(M_C)=\{\lambda\in\mathbb{C} : 0\leq |\lambda|\leq
1\}=\sigma(A)=\sigma(B).$ Hence
$\sigma_{gD}(M_C)=\{\lambda\in\mathbb{C} : 0\leq |\lambda|\leq
1\}=\sigma_{gD}(A)\cup\sigma_{gD}(B).$ Since $A$ and $B^*$ have
SVEP,
 then it follows from Corollary \ref{cgBWgd} that $\sigma_{pBW}(A)\cup\sigma_{pBW}(B)=\sigma_{gD}(A)\cup\sigma_{gD}(B),$
 while $\sigma_{pBW}(M_C)\subset \sigma_{W}(M_C)=\{\lambda\in\mathbb{C} : |\lambda|=1\}.$
\end{ex}

 The following result gives necessary and sufficient condition under which
the generalized Drazin spectrum of the  operator  $M_{C}$ is the
union of generalized Drazin spectra of its diagonal entries.

\begin{prop}\label{proples3} Let $A\in L(X),\,B\in L(Y)$ and $C\in L(Y,X).$
 Then the following assertions are equivalent.\\
 i) $\sigma(M_C)=\sigma(A)\cup\sigma(B)$;\\
 ii) $\sigma_{D}(M_C)=\sigma_{D}(A)\cup\sigma_{D}(B)$;\\
 iii) $\sigma_{gD}(M_C)=\sigma_{gD}(A)\cup\sigma_{gD}(B).$
\end{prop}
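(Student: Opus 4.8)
The plan is to prove the chain of implications (i) $\Longrightarrow$ (ii) $\Longrightarrow$ (iii) $\Longrightarrow$ (i), which is the natural cycle given that the three spectra are nested as $\sigma_{gD}(\cdot)\subset\sigma_D(\cdot)\subset\sigma(\cdot)$. The key structural fact I would exploit throughout is that for each of these three spectra $\sigma_*$, the inclusion $\sigma_*(M_C)\subset\sigma_*(A)\cup\sigma_*(B)$ always holds: for $\sigma_{gD}$ this is \eqref{eq1}, and the analogous inclusions for $\sigma$ and $\sigma_D$ are standard (and follow from the same upper-triangular argument). Consequently, each of the three equalities in (i), (ii), (iii) is equivalent to its reverse inclusion, namely $\sigma_*(A)\cup\sigma_*(B)\subset\sigma_*(M_C)$. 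So the entire proposition reduces to transferring these reverse inclusions between the three levels.

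First I would prove (iii) $\Longrightarrow$ (i). Assuming $\sigma_{gD}(M_C)=\sigma_{gD}(A)\cup\sigma_{gD}(B)$, I want $\sigma(A)\cup\sigma(B)\subset\sigma(M_C)$. Take $\lambda\notin\sigma(M_C)$; then $M_C-\lambda I$ is invertible, so in particular $M_C-\lambda I$ is generalized Drazin invertible, whence $\lambda\notin\sigma_{gD}(M_C)=\sigma_{gD}(A)\cup\sigma_{gD}(B)$, giving $0\notin\operatorname{acc}\sigma(A)$ and $0\notin\operatorname{acc}\sigma(B)$ (after the harmless normalization $\lambda=0$). The point now is to upgrade ``not an accumulation point of the spectrum'' to ``not in the spectrum'': invertibility of the triangular matrix $M_C-\lambda I$ forces $A-\lambda I$ to be bounded below (left invertible) and $B-\lambda I$ to be surjective (right invertible). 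Combining left-invertibility of $A-\lambda I$ with $0\notin\operatorname{acc}\sigma(A)$ rules out $0\in\sigma(A)$ (an isolated boundary point of the spectrum that is also left invertible would make $A$ invertible), and dually for $B$; hence $\lambda\notin\sigma(A)\cup\sigma(B)$. This is the step I expect to require the most care, since it is exactly where the difference between ordinary invertibility and generalized Drazin invertibility must be resolved using the one-sided invertibility coming from the triangular form.

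Next, (i) $\Longrightarrow$ (ii) and (ii) $\Longrightarrow$ (iii) should be lighter. For (i) $\Longrightarrow$ (ii), I would again take $\lambda\notin\sigma_D(M_C)$ and show $\lambda\notin\sigma_D(A)\cup\sigma_D(B)$; Drazin invertibility of $M_C-\lambda I$ means $M_C-\lambda I=U\oplus V$ with $U$ invertible and $V$ nilpotent, so $\lambda\in\operatorname{iso}\sigma(M_C)$ or $\lambda\notin\sigma(M_C)$. Using (i), $\sigma(M_C)=\sigma(A)\cup\sigma(B)$, so $\lambda$ is isolated in (or outside) $\sigma(A)\cup\sigma(B)$, hence isolated in (or outside) each of $\sigma(A)$ and $\sigma(B)$ individually; the same one-sided invertibility argument as above then promotes this to Drazin invertibility of $A-\lambda I$ and $B-\lambda I$. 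For (ii) $\Longrightarrow$ (iii), I recall from the excerpt that $\sigma_D(T)=\sigma_{gD}(T)\cup\operatorname{iso}\sigma_D(T)$, so the two spectra differ only in isolated points; starting from $\lambda\notin\sigma_{gD}(M_C)$ I can pass to a punctured neighbourhood on which $M_C-\mu I$ is invertible, and then the triangular one-sided invertibility plus the hypothesis (ii) yields generalized Drazin invertibility of the diagonal entries.

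The recurring engine in all three implications is the lemma: if $M_C-\lambda I$ is invertible then $A-\lambda I$ is left invertible and $B-\lambda I$ is right invertible, together with the elementary observation that an isolated point of the spectrum at which the operator is left (resp.\ right) invertible is actually a point where the operator is invertible. I would state this small observation once and reuse it. The only genuine obstacle is the asymmetry introduced by $C$: the triangular structure gives left-invertibility of $A$ but only right-invertibility of $B$, so one must handle the two diagonal entries with dual arguments rather than by a single symmetric statement, exactly as was done in the proof of Theorem~\ref{thm3}. Apart from that bookkeeping, the proof is a straightforward cycle of inclusions.
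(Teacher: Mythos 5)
Your route is genuinely different from the paper's: the paper obtains (i) $\Leftrightarrow$ (iii) by combining Theorem \ref{thm3} with the Elbjaoui--Zerouali characterization (${\mathcal S}(A^*)\cap{\mathcal S}(B)\subset\sigma(M_C)$ iff $\sigma(M_C)=\sigma(A)\cup\sigma(B)$), and simply cites \cite{Zg2013} for (i) $\Leftrightarrow$ (ii), whereas you run an elementary cycle through one-sided invertibility. Two of your three arrows are sound. For (iii) $\Longrightarrow$ (i), the lemma ``isolated point of the spectrum at which the operator is bounded below (resp.\ surjective) is a resolvent point'' is correct and does the job. For (ii) $\Longrightarrow$ (iii), the punctured-neighbourhood argument also closes: for $0<|\mu-\lambda|<\varepsilon$ the operator $A-\mu I$ is injective and, by (ii), Drazin invertible, hence has ascent $=$ descent $=0$ and is invertible; dually for $B-\mu I$.

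The gap is in (i) $\Longrightarrow$ (ii), and it is exactly at the point you flag as routine. From $\lambda\in\operatorname{iso}\sigma(M_C)$ and hypothesis (i) you correctly get that $\lambda$ is isolated in (or outside) $\sigma(A)$ and $\sigma(B)$; but this only yields \emph{generalized} Drazin invertibility of $A-\lambda I$ and $B-\lambda I$. Drazin invertibility requires in addition that the quasi-nilpotent summand in the Riesz decomposition be nilpotent, and nothing in your sketch produces that: your ``one-sided invertibility'' lemma applies to an operator that is itself left (or right) invertible at the point in question, whereas here $A-\lambda I$ is one-sidedly invertible only at the nearby points $\mu\neq\lambda$, which carries no information about ascent or descent at $\lambda$ itself. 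The paper's own example of a quasi-nilpotent weighted shift with $\sigma_{gD}=\emptyset$ but $\sigma_D=\{0\}$ shows precisely that the conclusion you want cannot be read off from the location of the spectrum alone. To repair the arrow you need a genuine transfer of nilpotency from $M_C-\lambda I$ to the diagonal entries, e.g.\ via the quasi-nilpotent part: if $(M_C-\lambda I)$ is Drazin invertible with nilpotency index $p$, then $H_0(A-\lambda I)\oplus\{0\}\subset H_0(M_C-\lambda I)=\mathcal{N}\bigl((M_C-\lambda I)^p\bigr)$ forces $(A-\lambda I)^p=0$ on $H_0(A-\lambda I)$, so $A-\lambda I$ is Drazin invertible, and then the dual argument on $M_C^*$ (or the known lemma that $M_C$ and $A$ Drazin invertible imply $B$ Drazin invertible) handles $B$. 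Without some such ingredient the cycle does not close, and only the implications (ii) $\Rightarrow$ (iii) $\Rightarrow$ (i) are established.
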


\begin{proof} For (i) $\Longleftrightarrow$ (ii) see \cite[Proposition 3.7]{Zg2013}.

 i) $\Leftrightarrow$ iii) was proved in \cite{Zhang2} but we give here another proof by using the local spectral property SVEP. Assume that $\sigma(M_C)=\sigma(A)\cup\sigma(B)$. Then it follows from \cite[Theorem 2.5]{Lb}
that  ${\mathcal S}(A^*)\cap
 {\mathcal S}(B)\subset \sigma(M_{C}).$  Since ${\mathcal S}(A^*)\cap {\mathcal S}(B)$
  is an open subset then ${\mathcal S}(A^*)\cap
 {\mathcal S}(B)\subset \mbox{acc}\sigma(M_C).$
 Thus ${\mathcal S}(A^*)\cap{\mathcal S}(B)\subset\sigma_{gD}(M_C).$
By Theorem \ref{thm3} we conclude that
$\sigma_{gD}(M_C)=\sigma_{gD}(A)\cup\sigma_{gD}(B).$

 Conversely, suppose that  $\sigma_{gD}(M_C)=\sigma_{gD}(A)\cup\sigma_{gD}(B).$ Then it follows from
  Theorem \ref{thm3} that ${\mathcal S}(A^*)\cap{\mathcal S}(B)\subset\sigma_{gD}(M_C).$
 So ${\mathcal S}(A^*)\cap{\mathcal S}(B)\subset\sigma(M_C).$
 This  implies again by \cite[Theorem 2.5]{Lb} that   $\sigma(M_C)=\sigma(A)\cup\sigma(B).$\end{proof}

\begin{cor} Let $A\in L(X),\,B\in L(Y).$  If $\sigma(A)=\sigma_l(A)$ or $\sigma(B)=\sigma_r(B)$
 then  for every $ C\in L(Y,X)$ we have  $$\sigma_{gD}(M_C)=\sigma_{gD}(A)\cup\sigma_{gD}(B).$$ \end{cor}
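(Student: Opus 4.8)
The plan is to reduce the corollary to the sufficient condition already established in Proposition~\ref{proples3}, specifically to the equality $\sigma(M_C)=\sigma(A)\cup\sigma(B)$ (assertion (i)), which by that proposition is equivalent to the generalized Drazin statement we want. So the whole task becomes: show that the hypothesis $\sigma(A)=\sigma_l(A)$ or $\sigma(B)=\sigma_r(B)$ forces $\sigma(M_C)=\sigma(A)\cup\sigma(B)$ for every $C$.

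First I would recall that the inclusion $\sigma(M_C)\subset\sigma(A)\cup\sigma(B)$ is always valid, so only the reverse inclusion needs argument. To get the reverse inclusion, the natural route is to take $\lambda\notin\sigma(M_C)$ and prove $\lambda\notin\sigma(A)\cup\sigma(B)$. For such $\lambda$ the operator $M_C-\lambda I$ is invertible, and the standard $2\times2$ block computation gives that $A-\lambda I$ is left invertible (equivalently bounded below, so $\lambda\notin\sigma_l(A)$) and $B-\lambda I$ is right invertible (equivalently surjective, so $\lambda\notin\sigma_r(B)$). Under the hypothesis $\sigma(A)=\sigma_l(A)$, the condition $\lambda\notin\sigma_l(A)$ already yields $\lambda\notin\sigma(A)$; and since $\lambda\notin\sigma(M_C)$ forces $\lambda\notin\sigma(A)\cap\sigma(B)$ by the usual complementary relation (or more directly, invertibility of $M_C$ together with invertibility of $A$ forces invertibility of $B$), one concludes $\lambda\notin\sigma(B)$ as well. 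The symmetric argument handles the case $\sigma(B)=\sigma_r(B)$.

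Concretely, in the case $\sigma(A)=\sigma_l(A)$: from $\lambda\notin\sigma_l(A)=\sigma(A)$ we get that $A-\lambda I$ is invertible. Combined with the invertibility of $M_C-\lambda I$, the block structure $M_C-\lambda I=\left(\begin{smallmatrix} A-\lambda I & C\\ 0 & B-\lambda I\end{smallmatrix}\right)$ immediately gives invertibility of $B-\lambda I$ (invert the upper-triangular matrix explicitly, or observe that the $(2,2)$ entry of the inverse must be $(B-\lambda I)^{-1}$). Hence $\lambda\notin\sigma(A)\cup\sigma(B)$, proving $\sigma(A)\cup\sigma(B)\subset\sigma(M_C)$ and therefore equality. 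In the case $\sigma(B)=\sigma_r(B)$, one symmetrically uses that $\lambda\notin\sigma_r(B)=\sigma(B)$ makes $B-\lambda I$ invertible, and then the invertibility of $M_C-\lambda I$ forces $A-\lambda I$ invertible.

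I do not expect a serious obstacle here, since this is essentially a spectral inclusion argument combined with an appeal to the already-proven Proposition~\ref{proples3}. The only point requiring a little care is the extraction of left/right invertibility of the diagonal entries from invertibility of the triangular matrix, and in particular making sure that the one-sided hypothesis on $A$ (or $B$) is exactly what upgrades one-sided to two-sided invertibility; this is precisely why the hypothesis is phrased with $\sigma_l$ for $A$ and $\sigma_r$ for $B$ rather than the other way around. Once $\sigma(M_C)=\sigma(A)\cup\sigma(B)$ is in hand, the desired conclusion $\sigma_{gD}(M_C)=\sigma_{gD}(A)\cup\sigma_{gD}(B)$ follows verbatim from the implication (i)$\Rightarrow$(iii) of Proposition~\ref{proples3}.
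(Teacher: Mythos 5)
Your proposal is correct and takes essentially the same route as the paper: reduce the corollary to the equality $\sigma(M_C)=\sigma(A)\cup\sigma(B)$ and then invoke the equivalence (i)$\Leftrightarrow$(iii) of Proposition~\ref{proples3}; the only difference is that you prove the spectral equality directly, whereas the paper simply cites Barnes for it, and your direct argument is sound because the $(1,1)$ and $(2,2)$ entries of $(M_C-\lambda I)^{-1}$ give an honest left inverse of $A-\lambda I$ and right inverse of $B-\lambda I$. One minor imprecision, which does not affect the proof: left invertibility in $L(X)$ is not literally equivalent to being bounded below (the range must also be complemented), so the parenthetical ``equivalently bounded below'' should be dropped.
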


\begin{proof} If $\sigma(A)=\sigma_l(A)$ or $\sigma(B)=\sigma_r(B)$
 then  $\sigma(M_C)=\sigma(A)\cup\sigma(B),$ see
 \cite{Barnes1}. But this is equivalent from
  Proposition \ref{proples3} to say that $\sigma_{gD}(M_C)=\sigma_{gD}(A)\cup\sigma_{gD}(B).$ \end{proof}

For $U$ and $V\in L(X),$ let  $L_U$ (resp., $R_V$) be the left
(resp., right) multiplication operator given by $L_U(W) = UW$
(resp., $R_V(W) = WV$) and let $ \delta_{U,V} = L_U - R_V$ be the
usual generalized derivation associated with $U$ and $V.$ Let
$\mathcal{N}^{\infty}(U)=\displaystyle\bigcup_{n\geq
1}\mathcal{N}(U^n)$ denote the generalized kernel of $U.$

\begin{thm}\label{thmD1}Let $A\in{\mathcal L}(X)$ and $B\in{\mathcal L}(Y).$ If $C$ is in the closure of the
set $${\mathcal R}(\delta_{A,B})+{\mathcal N}(\delta_{A,B}) +
\bigcup_{\lambda \in\mathbb{C}}\mathcal{N}^{\infty}(L_{A-\lambda I})
+\bigcup_{\lambda \in \mathbb{C}}\mathcal{N}^{\infty}(R_{B-\lambda
I}),$$ then
$$\sigma_{gD}(M_C)=\sigma_{gD}(A)\cup\sigma_{gD}(B).$$\end{thm}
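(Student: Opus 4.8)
The plan is to reduce the problem to verifying, at each point $\lambda$, that the membership condition of the previous theorem (Theorem~\ref{thm3}) forces the defect set $[\sigma_{gD}(A)\cup\sigma_{gD}(B)]\setminus\sigma_{gD}(M_C)$ to be empty. By Theorem~\ref{thm3} we already know that
\[
\sigma_{gD}(M_C)\cup[{\mathcal S}(A^*)\cap{\mathcal S}(B)]=\sigma_{gD}(A)\cup\sigma_{gD}(B),
\]
so the desired equality $\sigma_{gD}(M_C)=\sigma_{gD}(A)\cup\sigma_{gD}(B)$ is equivalent to showing
\[
{\mathcal S}(A^*)\cap{\mathcal S}(B)\subset\sigma_{gD}(M_C).
\]
Thus the entire theorem reduces to this single inclusion, and the hypothesis on $C$ must be used precisely to guarantee it.

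First I would recall that ${\mathcal S}(A^*)\cap{\mathcal S}(B)$ is an open set, so it suffices to prove ${\mathcal S}(A^*)\cap{\mathcal S}(B)\subset\mbox{acc}\,\sigma(M_C)$, since an open subset of $\sigma(M_C)$ automatically lies in $\mbox{acc}\,\sigma(M_C)$ (as was used in the proof of Proposition~\ref{proples3}). Equivalently, via Proposition~\ref{proples3}, it is enough to establish that the hypothesis on $C$ forces the full-spectrum equality $\sigma(M_C)=\sigma(A)\cup\sigma(B)$; then the generalized Drazin equality follows immediately from the equivalence (i)$\Leftrightarrow$(iii) already proved. This is the route I expect to be cleanest: translate the hypothesis into a statement about $\sigma(M_C)$ rather than directly about $\sigma_{gD}(M_C)$.

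Next I would exploit the structure of the hypothesis set. The set
\[
{\mathcal R}(\delta_{A,B})+{\mathcal N}(\delta_{A,B})+\bigcup_{\lambda\in\mathbb{C}}{\mathcal N}^{\infty}(L_{A-\lambda I})+\bigcup_{\lambda\in\mathbb{C}}{\mathcal N}^{\infty}(R_{B-\lambda I})
\]
is built from pieces each of which is known, in the operator-matrix literature, to guarantee $\sigma(M_C)=\sigma(A)\cup\sigma(B)$: commutator-type terms from ${\mathcal R}(\delta_{A,B})$ and ${\mathcal N}(\delta_{A,B})$ link $C$ to intertwining relations $AX-XB$, while the generalized-kernel terms ${\mathcal N}^{\infty}(L_{A-\lambda I})$ and ${\mathcal N}^{\infty}(R_{B-\lambda I})$ encode nilpotency of the associated multiplication operators. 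For each generator $C$ of this set I would show that $M_C-\lambda I$ is invertible whenever both $A-\lambda I$ and $B-\lambda I$ are, using the standard factorization $M_C=\bigl(\begin{smallmatrix}I&X\\0&I\end{smallmatrix}\bigr)^{-1}M_0\bigl(\begin{smallmatrix}I&X\\0&I\end{smallmatrix}\bigr)$ available when $C\in{\mathcal R}(\delta_{A,B})$, and analogous nilpotent-perturbation arguments for the generalized-kernel terms. The main obstacle will be twofold: first, handling the \emph{sum} of these four types simultaneously rather than one generator at a time, which requires checking that the invertibility argument survives addition; and second, passing from the set itself to its \emph{closure}, which demands an upper semicontinuity (or continuity) argument for the spectrum $\sigma(M_C)$ as $C$ varies. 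For the closure step I would argue that the set of $C$ satisfying $\sigma(M_C)=\sigma(A)\cup\sigma(B)$ is closed (the reverse inclusion $\sigma(M_C)\subset\sigma(A)\cup\sigma(B)$ always holds, and the reverse containment $\sigma(A)\cup\sigma(B)\subset\sigma(M_C)$ is a closed condition because it amounts to ${\mathcal S}(A^*)\cap{\mathcal S}(B)\subset\sigma(M_C)$, with the membership $\lambda\in\sigma(M_C)$ preserved under norm limits of $C$ when the complementary open resolvent condition fails stably), so that density of the generating set inside its closure transfers the equality. This semicontinuity/closedness verification is where I expect the real work to lie.
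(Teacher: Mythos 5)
Your reduction is exactly the paper's: the whole proof in the paper consists of quoting \cite[Theorem 3.4]{Zg2013}, which states that $C$ in the closure of the given set implies $\sigma_{D}(M_C)=\sigma_{D}(A)\cup\sigma_{D}(B)$, and then applying the equivalence (ii)$\Leftrightarrow$(iii) of Proposition \ref{proples3}. So passing through Proposition \ref{proples3} to trade $\sigma_{gD}$ for $\sigma$ (or $\sigma_D$) is the right move, and your observation that the closure step is harmless is also correct --- for each fixed $\lambda$ the set $\{C : \lambda\in\sigma(M_C)\}$ is norm-closed because the invertible operators form an open set, so $\{C:\sigma(A)\cup\sigma(B)\subset\sigma(M_C)\}$ is an intersection of closed sets; this is simpler than the semicontinuity discussion you anticipate.

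The gap is in the input you then need, namely that every $C$ in the (unclosed) set ${\mathcal R}(\delta_{A,B})+{\mathcal N}(\delta_{A,B})+\bigcup_{\lambda}\mathcal{N}^{\infty}(L_{A-\lambda I})+\bigcup_{\lambda}\mathcal{N}^{\infty}(R_{B-\lambda I})$ satisfies $\sigma(M_C)=\sigma(A)\cup\sigma(B)$. First, the implication you say you would verify --- ``$M_C-\lambda I$ is invertible whenever both $A-\lambda I$ and $B-\lambda I$ are'' --- is the trivial inclusion $\sigma(M_C)\subset\sigma(A)\cup\sigma(B)$, which holds for \emph{every} $C$ and uses none of the hypothesis; what must be proved is the converse, that invertibility of $M_C-\lambda I$ forces invertibility of $A-\lambda I$ and $B-\lambda I$. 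The similarity $M_C=\bigl(\begin{smallmatrix}I&X\\0&I\end{smallmatrix}\bigr)M_0\bigl(\begin{smallmatrix}I&X\\0&I\end{smallmatrix}\bigr)^{-1}$ does settle both directions when $C=AX-XB$, but for the intertwining part ${\mathcal N}(\delta_{A,B})$ and for the generalized-kernel parts no argument is given (``analogous nilpotent-perturbation arguments'' does not indicate one), and the case of a \emph{sum} of the four types --- which you yourself flag as the main obstacle --- is precisely where the difficulty sits, since the individual tricks do not compose. This unproved statement is exactly the content of the cited \cite[Theorem 3.4]{Zg2013}; without either reproducing that proof or citing it, the argument is incomplete.
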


\begin{proof} If $C$ is in the closure of the set ${\mathcal R}(\delta_{A,B})+{\mathcal N}(\delta_{A,B}) +
\displaystyle\bigcup_{\lambda
\in\mathbb{C}}\mathcal{N}^{\infty}(L_{A-\lambda I})
+\bigcup_{\lambda \in \mathbb{C}}\mathcal{N}^{\infty}(R_{B-\lambda
I}),$ then $\sigma_{D}(M_C)=\sigma_{D}(A)\cup\sigma_{D}(B),$ see
\cite[Theorem 3.4]{Zg2013}. The result follows at once from Proposition \ref{proples3}.\end{proof}

In general there is no definite relation between the condition considered in Corollary \ref{cdSVEP} and the condition considered in the above theorem. Indeed, let $A=S$ and $B=S^*$. Set $C=A-B$, then $C\in {\mathcal R}(\delta_{A,B})$. Hence $\sigma_{gD}(M_C)=\sigma_{gD}(A)\cup\sigma_{gD}(B)$ while $\mathcal{S}(A^*)\cap\mathcal{S}(B)=\{\lambda\in\mathbb{C}\,:\,|\lambda|<1\}$.
\smallskip

 Now, in the following definition, we introduce the concept of right
 and left generalized Drazin invertibility for bounded linear
 operators.
\begin{defn}\rm Let $T\in L(X).$ We will say that \\
i) $T$ is {\it left generalized Drazin invertible} if $0\not\in
\mbox{acc}
\sigma_l(T).$ \\
ii)  $T$ is {\it right generalized Drazin invertible}
 if $0\not\in \mbox{acc} \sigma_r(T).$\\
The  {\it right generalized Drazin
  spectrum}   is defined by
 $$\sigma_{rgD}(T)=\{\lambda\in\mathbb{C} :  T-\lambda I \mbox{ is not right generalized Drazin invertible}\}$$
  and the  {\it left  generalized Drazin
  spectrum}   is defined by $$\sigma_{lgD}(T)=\{\lambda\in\mathbb{C} :  T-\lambda I \mbox{ is not left generalized Drazin invertible}\}.$$\end{defn}

\begin{thm}\label{thm4} Let $A\in L(X),$ $B\in L(Y)$ and $C\in L(Y,X).$
 If  $M_C$ is generalized Drazin invertible,  then the following statements hold.\\
i) $A$ is left generalized Drazin invertible.\\
ii) $B$ is right generalized Drazin invertible.\\
iii) There exists a constant $\gamma>0$ such that $d(A-\lambda
I)=n(B-\lambda I)$ for every $0<|\lambda|<\gamma.$\end{thm}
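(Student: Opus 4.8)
The plan is to convert the generalized Drazin invertibility of $M_C$ into genuine invertibility on a punctured disc and then exploit the upper triangular structure. Since $M_C$ is generalized Drazin invertible we have $0\notin\mbox{acc}\,\sigma(M_C)$, so there exists $\gamma>0$ with $M_C-\lambda I$ invertible for every $0<|\lambda|<\gamma$; this $\gamma$ is the constant required in (iii), and everything below is proved for such $\lambda$.

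For (i) and (ii) I would read the constraints on the diagonal entries directly off the block form of the inverse of $M_C-\lambda I=\left(\begin{smallmatrix} A-\lambda I & C\\ 0 & B-\lambda I\end{smallmatrix}\right)$. Writing the inverse as $\left(\begin{smallmatrix} P & Q\\ R & S\end{smallmatrix}\right)$ and multiplying out the two products, the upper-left entry of one gives $P(A-\lambda I)=I$, so $A-\lambda I$ is left invertible, while the lower-right entry of the other gives $(B-\lambda I)S=I$, so $B-\lambda I$ is right invertible. Hence $\lambda\notin\sigma_l(A)$ and $\lambda\notin\sigma_r(B)$ for all $0<|\lambda|<\gamma$, whence $0\notin\mbox{acc}\,\sigma_l(A)$ and $0\notin\mbox{acc}\,\sigma_r(B)$; these are precisely assertions (i) and (ii).

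The real content, and the step I expect to be the main obstacle, is (iii): extracting the equality $d(A-\lambda I)=n(B-\lambda I)$ from the invertibility of $M_C-\lambda I$. The conceptual crux is to introduce the natural linear map $\phi\colon\mathcal{N}(B-\lambda I)\to X/\mathcal{R}(A-\lambda I)$, $\phi(y)=Cy+\mathcal{R}(A-\lambda I)$, and to recognise that the invertibility of $M_C-\lambda I$ is exactly what makes $\phi$ a bijection. Surjectivity of $M_C-\lambda I$ applied to pairs $(u,0)$ yields $X=\mathcal{R}(A-\lambda I)+C(\mathcal{N}(B-\lambda I))$, so $\phi$ is onto; and if $y\in\mathcal{N}(B-\lambda I)$ satisfies $Cy=(A-\lambda I)x_0$ for some $x_0$, then $(-x_0,y)\in\mathcal{N}(M_C-\lambda I)=\{0\}$, forcing $y=0$, so $\phi$ is injective.

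Once $\phi$ is seen to be an isomorphism, the remaining steps are routine: checking that $\phi$ is well defined (it depends only on the coset of $Cy$), and observing that a linear isomorphism between $\mathcal{N}(B-\lambda I)$ and $X/\mathcal{R}(A-\lambda I)$ gives precisely $n(B-\lambda I)=\dim X/\mathcal{R}(A-\lambda I)=d(A-\lambda I)$, valid for all $0<|\lambda|<\gamma$. I note that this $\phi$-isomorphism is the necessity half of the classical invertibility criterion for $2\times 2$ upper triangular operator matrices, so (iii) could alternatively be quoted from that criterion; I prefer the short self-contained argument.
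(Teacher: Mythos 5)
Your proof is correct and follows essentially the same route as the paper: both reduce generalized Drazin invertibility of $M_C$ to invertibility of $M_C-\lambda I$ on a punctured disc $0<|\lambda|<\gamma$ and then read off (i)--(iii) from the invertibility of the upper triangular matrix. The only difference is that the paper simply cites the necessity half of the Han--Lee--Lee invertibility criterion at this point, whereas you prove that half directly (the block-inverse computation for left/right invertibility and the isomorphism $\phi\colon\mathcal{N}(B-\lambda I)\to X/\mathcal{R}(A-\lambda I)$ for the dimension equality) --- a self-contained substitute that you yourself identify as such.
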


\begin{proof} Assume that $M_C$ is generalized Drazin invertible.
Then there exists  $\gamma>0$ such that $M_C-\lambda I$ is
invertible for every $0<|\lambda|<\gamma.$ So and by virtue  of
\cite[Theorem 2]{Lee} we have  $A-\lambda I$ is left invertible and
$B-\lambda I$
 is right invertible  for every $0<|\lambda|<\gamma.$ Thus
  $0\not\in \mbox{acc} \sigma_l(A)\cup\mbox{ acc} \sigma_r(B).$
 This proves that  $A$ is left generalized Drazin invertible and $B$ is right generalized Drazin invertible.
 On the other hand,  since $M_C-\lambda I$ is invertible for $0<|\lambda|<\gamma,$ then
 again by  \cite[Theorem 2]{Lee} we obtain that  $d(A-\lambda I)=n(B-\lambda I)$ for every
  $0<|\lambda|<\gamma.$ \end{proof}

From Theorem \ref{thm4}, we derive  the following corollary.

\begin{cor}\label{cor1.1} Let $A\in L(X),$ $B\in L(Y).$ Then for every $C\in
L(Y,X)$ we have
$$[\sigma_{gD}(A)\cup\sigma_{gD}(B)]\setminus[\sigma_{rgD}(A)\cap\sigma_{lgD}(B)]\subset\sigma_{gD}(M_C).$$
\end{cor}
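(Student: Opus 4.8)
The plan is to argue by contraposition and to reduce to the point $\lambda=0$ by the usual translation $T\mapsto T-\lambda I$. So I would fix $\lambda$ in the left-hand set, normalize to $\lambda=0$ for readability, and assume toward a contradiction that $M_C$ is generalized Drazin invertible, i.e. $0\notin\sigma_{gD}(M_C)$. Under this normalization the membership of $\lambda$ in the left-hand set reads: $0\in\sigma_{gD}(A)\cup\sigma_{gD}(B)$, together with the disjunction ``$0\notin\sigma_{rgD}(A)$ or $0\notin\sigma_{lgD}(B)$'' coming from $0\notin\sigma_{rgD}(A)\cap\sigma_{lgD}(B)$. The goal is then to derive a contradiction from all three facts.

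The engine of the argument is the elementary identity $\sigma(T)=\sigma_l(T)\cup\sigma_r(T)$ together with the fact that accumulation points distribute over finite unions, $\mbox{acc}(E\cup F)=\mbox{acc}\,E\cup\mbox{acc}\,F$; these combine to show that $T$ is generalized Drazin invertible as soon as it is both left and right generalized Drazin invertible (since $0\notin\mbox{acc}\,\sigma_l(T)\cup\mbox{acc}\,\sigma_r(T)=\mbox{acc}\,\sigma(T)$). From the standing assumption that $M_C$ is generalized Drazin invertible, Theorem \ref{thm4} already supplies one half of this at $0$, namely that $A$ is left generalized Drazin invertible and $B$ is right generalized Drazin invertible.

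I would then split along the disjunction. In the case $0\notin\sigma_{rgD}(A)$, the operator $A$ is also right generalized Drazin invertible, so by the identity above it is generalized Drazin invertible; then $M_C$ and $A$ both generalized Drazin invertible force $B$ to be generalized Drazin invertible via \cite[Lemma 2.5]{Zhang2}, and this contradicts $0\in\sigma_{gD}(A)\cup\sigma_{gD}(B)$. The case $0\notin\sigma_{lgD}(B)$ is the mirror image: $B$ is left and (by Theorem \ref{thm4}) right generalized Drazin invertible, hence generalized Drazin invertible, and then $M_C$ and $B$ generalized Drazin invertible force $A$ to be generalized Drazin invertible by the same lemma, again a contradiction. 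In either case the assumption $0\notin\sigma_{gD}(M_C)$ fails, so $0\in\sigma_{gD}(M_C)$, which is the claimed inclusion.

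The step I expect to carry the weight is not a computation but the bookkeeping of which one-sided datum pairs with which operator. Theorem \ref{thm4} delivers only one-sided invertibility ($A$ on the left, $B$ on the right), so the complementary one-sided information must be read off from the hypothesis $\lambda\notin\sigma_{rgD}(A)\cap\sigma_{lgD}(B)$, and it is essential that $\sigma_{rgD}(A)$ be matched with the left-invertibility of $A$ and $\sigma_{lgD}(B)$ with the right-invertibility of $B$. Once the pairing is correct, the ``two-out-of-three'' splitting lemma \cite[Lemma 2.5]{Zhang2} closes each case at once; I would double-check only that this lemma is available in both directions ($M_C,A\Rightarrow B$ and $M_C,B\Rightarrow A$), as indeed it is used in the proof of Theorem \ref{thm3}.
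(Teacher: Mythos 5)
Your proof is correct and follows essentially the same route as the paper: apply Theorem \ref{thm4} to get the one-sided generalized Drazin invertibility of $A$ and $B$, then split on the disjunction and use \cite[Lemma 2.5]{Zhang2} to derive a contradiction in each case. The only difference is that you make explicit the step "left and right generalized Drazin invertible implies generalized Drazin invertible" via $\sigma(T)=\sigma_l(T)\cup\sigma_r(T)$ and the distribution of accumulation points over finite unions, which the paper leaves implicit.
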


\begin{proof} Let $\lambda\in[\sigma_{gD}(A)\cup\sigma_{gD}(B)]\setminus\sigma_{gD}(M_C),$
then $A-\lambda I$ is left generalized Drazin invertible and
$B-\lambda I$ is right generalized Drazin invertible, by Theorem
\ref{thm4}. If $\lambda\not\in\sigma_{rgD}(A),$ then $A-\lambda I$
is generalized Drazin invertible. Since $M_C-\lambda I$ is
generalized Drazin invertible, then $B-\lambda I$
 is also generalized Drazin invertible. This is a contradiction.
 Analogously, we have $\lambda\in\sigma_{lgD}(A).$
  Thus  $\lambda\in\sigma_{rgD}(A)\cap\sigma_{lgD}(B).$
\end{proof}

The following theorem gives a slight generalization of the main result of \cite{Zhang2}.

\begin{thm}\label{thmhole} Let $A\in L(X),$ $B\in L(Y).$ Then for every $C\in
L(Y,X)$ we have
$$\sigma_{gD}(A)\cup\sigma_{gD}(B)=\sigma_{gD}(M_C)\cup\mathcal{W},$$
where $\mathcal{W}$ is the union of certain holes in
$\sigma_{gD}(M_C)$ which happen to be subsets of
$\sigma_{rgD}(A)\cap\sigma_{lgD}(B).$
\end{thm}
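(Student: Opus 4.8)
The plan is to set $K_1:=\sigma_{gD}(M_C)$ and $K_2:=\sigma_{gD}(A)\cup\sigma_{gD}(B)$, and to recall first that $T-\lambda I$ is generalized Drazin invertible exactly when $\lambda\notin\mbox{acc}\,\sigma(T)$, so that $\sigma_{gD}(T)=\mbox{acc}\,\sigma(T)$ is compact; in particular $K_1$ and $K_2$ are compact and, by \eqref{eq1}, $K_1\subset K_2$. A hole of $K_1$ being a bounded connected component of the open set $\Omega:=\mathbb{C}\setminus K_1$, the assertion amounts to showing that the defect set $K_2\setminus K_1=K_2\cap\Omega$ is a union of such bounded components, each contained in $\sigma_{rgD}(A)\cap\sigma_{lgD}(B)$. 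The containment is already free: Corollary \ref{cor1.1} gives $K_2\setminus[\sigma_{rgD}(A)\cap\sigma_{lgD}(B)]\subset K_1$, hence $K_2\cap\Omega\subset\sigma_{rgD}(A)\cap\sigma_{lgD}(B)$. Since $K_2$ is closed, $K_2\cap\Omega$ is automatically relatively closed in $\Omega$; thus the whole problem reduces to proving that $K_2\cap\Omega$ is also relatively \emph{open} in $\Omega$, which (as $\Omega$ is open) is equivalent to the topological statement $\partial K_2\subset K_1$.

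The heart of the argument is therefore to prove $\partial K_2\subset\sigma_{gD}(M_C)$, and here is where Theorem \ref{thm4} enters. Let $\lambda\in\partial K_2$ and suppose, for contradiction, that $\lambda\notin K_1$, i.e. $M_C-\lambda I$ is generalized Drazin invertible. By Theorem \ref{thm4} there is $\gamma>0$ such that on the punctured disc $P=\{\mu:0<|\mu-\lambda|<\gamma\}$ the operator $A-\mu I$ is left invertible, $B-\mu I$ is right invertible, and $d(A-\mu I)=n(B-\mu I)$. In particular $A-\mu I$ is injective with closed range, hence upper semi-Fredholm with $n(A-\mu I)=0$ throughout the connected set $P$, so $\mbox{ind}(A-\mu I)$ is constant on $P$. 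Because $\lambda$ is a boundary point of $K_2$ there is a sequence $\mu_n\to\lambda$ with $\mu_n\notin K_2$; for large $n$ these lie in $P$, and since $\mu_n\notin\mbox{acc}\,\sigma(A)$ we may choose $\nu$ arbitrarily close to $\mu_n$ (still in $P$) with $A-\nu I$ invertible, forcing $\mbox{ind}(A-\mu I)\equiv 0$ on $P$. Thus $d(A-\mu I)=0$ on $P$, so $A-\mu I$ is invertible for every $\mu\in P$ and $\lambda\notin\mbox{acc}\,\sigma(A)$; the symmetric argument, with $B-\mu I$ lower semi-Fredholm and $d(B-\mu I)=0$, gives $\lambda\notin\mbox{acc}\,\sigma(B)$. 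Hence $\lambda\notin K_2$, contradicting $\lambda\in\partial K_2\subset K_2$.

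With $\partial K_2\subset K_1$ in hand, $K_2\cap\Omega$ is clopen in $\Omega$ and therefore a union of connected components of $\Omega$; as $K_2$ is bounded none of these can be the unbounded component, so each is a hole of $K_1$. Denoting their union by $\mathcal{W}$ we obtain $\sigma_{gD}(A)\cup\sigma_{gD}(B)=\sigma_{gD}(M_C)\cup\mathcal{W}$, and since $\mathcal{W}=K_2\cap\Omega\subset\sigma_{rgD}(A)\cap\sigma_{lgD}(B)$ every hole making up $\mathcal{W}$ lies in $\sigma_{rgD}(A)\cap\sigma_{lgD}(B)$, as required. I expect the genuinely delicate step to be the constancy-of-index argument on $P$: one must be careful that $A-\mu I$ is upper semi-Fredholm on the \emph{whole} punctured disc (not merely Fredholm), and that the index, a priori valued in $\mathbb{Z}\cup\{-\infty\}$, is pinned to $0$ by the presence of invertible points coming from the boundary hypothesis. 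The matching relation $d(A-\mu I)=n(B-\mu I)$ furnished by Theorem \ref{thm4} is what couples the $A$ and $B$ sides and lets the single point $\lambda$ be cleared simultaneously from $\mbox{acc}\,\sigma(A)$ and from $\mbox{acc}\,\sigma(B)$.
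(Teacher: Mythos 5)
Your proof is correct, but it takes a genuinely different route from the paper. The paper's argument is two lines: it quotes from \cite{Zhang2} the identity $\eta(\sigma_{gD}(A)\cup\sigma_{gD}(B))=\eta(\sigma_{gD}(M_C))$ for the polynomially convex hull, so that the defect set must sit inside the holes of $\sigma_{gD}(M_C)$, and then invokes Corollary \ref{cor1.1} to relocate those holes inside $\sigma_{rgD}(A)\cap\sigma_{lgD}(B)$; the fact that the defect consists of \emph{entire} holes is inherited from the external reference. You instead give a self-contained proof of the hole structure: after the standard reduction to showing that $K_2\cap\Omega$ is clopen in $\Omega$, equivalently $\partial\bigl(\sigma_{gD}(A)\cup\sigma_{gD}(B)\bigr)\subset\sigma_{gD}(M_C)$, you run Theorem \ref{thm4} on a punctured disc, use that left (resp.\ right) invertibility makes $A-\mu I$ upper (resp.\ $B-\mu I$ lower) semi-Fredholm with locally constant index, and let the invertible points supplied by the boundary hypothesis pin the index to zero, forcing invertibility throughout the punctured disc and a contradiction. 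This is longer but buys independence from \cite{Zhang2}: your argument reproves the hull equality as a byproduct, using only Theorem \ref{thm4}, Corollary \ref{cor1.1} and the continuity of the semi-Fredholm index. One cosmetic remark: the coupling $d(A-\mu I)=n(B-\mu I)$ that you single out at the end is not actually needed in your argument, since $\mu_n\notin K_2$ already gives invertible points for $A$ and for $B$ separately; it would only become essential if you wanted to clear one entry knowing only that the other is generalized Drazin invertible.
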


\begin{proof} From \cite{Zhang2} we have $$\eta(\sigma_{gD}(A)\cup\sigma_{gD}(B))=\eta(\sigma_{gD}(M_C),$$
where $\eta(.)$ is the polynomially convex hull. By Corollary
\ref{cor1.1}
 the filling some holes in $\sigma_{gD}(M_C)$ should occurs in $\sigma_{rgD}(A)\cap\sigma_{lgD}(B).$
\end{proof}

\end{document}